\newtheorem{theorem}{Theorem}[section]
\newtheorem{proposition}[theorem]{Proposition}
\newtheorem{corollary}[theorem]{Corollary}
\theoremstyle{definition}
\theoremstyle{definitions}
\newtheorem{definition}[theorem]{Definition}
\newtheorem{remark}[theorem]{Remark}
\newtheorem{example}[theorem]{Example}
\theoremstyle{notations}
\theoremstyle{remarks}
\journal{}
\begin{document}

\begin{frontmatter}

%% Title, authors and addresses

%% use the tnoteref command within \title for footnotes;
%% use the tnotetext command for the associated footnote;
%% use the fnref command within \author or \address for footnotes;
%% use the fntext command for the associated footnote;
%% use the corref command within \author for corresponding author footnotes;
%% use the cortext command for the associated footnote;
%% use the ead command for the email address,
%% and the form \ead[url] for the home page:
%%
%% \title{Title\tnoteref{label1}}
%% \tnotetext[label1]{}
%% \author{Name\corref{cor1}\fnref{label2}}
%% \ead{email address}
%% \ead[url]{home page}
%% \fntext[label2]{}
%% \cortext[cor1]{}
%% \address{Address\fnref{label3}}
%% \fntext[label3]{}

\title{On Strong Small Loop Transfer Spaces Relative to Subgroups of Fundamental Groups}

%% use optional labels to link authors explicitly to addresses:
%% \author[label1,label2]{<author name>}
%% \address[label1]{<address>}
%% \address[label2]{<address>}

\author[]{S.Z. Pashaei}
\ead{Pashaei.seyyedzeynal@stu.um.ac.ir}
\author[]{B. Mashayekhy\corref{cor1}}
\ead{bmashf@um.ac.ir }
\author[]{M. Abdullahi Rashid}
\ead{mbinev@stu.um.ac.ir }

\address{Department of Pure Mathematics, Center of Excellence in Analysis on Algebraic Structures, Ferdowsi University of Mashhad,\\
P.O.Box 1159-91775, Mashhad, Iran.}
\cortext[cor1]{Corresponding author}
\begin{abstract}
Let $H$ be a subgroup of the fundamental group $\pi_{1}(X,x_{0})$. By extending the concept of strong SLT space to a relative version with respect to $H$, strong $H$-SLT space, first, we investigate the existence of a covering map for strong $H$-SLT spaces. Moreover, we show that a semicovering map is a covering map in the presence of strong $H$-SLT property. Second, we present conditions under which the whisker topology agrees with the lasso topology on $\widetilde{X}_{H}$. Also, we study the relationship between open subsets of $\pi_{1}^{wh}(X,x_{0})$ and $\pi_{1}^{l}(X,x_{0})$. Finally, we give some examples to justify the definition and study of strong $H$-SLT spaces.
\end{abstract}

\begin{keyword}
Strong small loop transfer space\sep quasitopological fundamental group\sep whisker topology\sep lasso topology\sep covering map\sep semicovering map.
%% keywords here, in the form: keyword \sep keyword
\MSC[2010]{57M10, 57M12, 57M05, 55Q05.}
%% MSC codes here, in the form: \MSC code \sep code
%% or \MSC[2008] code \sep code (2000 is the default)

\end{keyword}

\end{frontmatter}

%%
%% Start line numbering here if you want
%%
% \linenumbers

%% main text
%\\\\\\\\\\\\\\\\\\\\\\\\\\\\\\\\\\\\\\\\\\\\\\\\\\\\\\\\\\\\\\\\\\\\\\\\\\\\\\\\\\\\\\\\\\\\\\\\\\\\\\\\\\\\\\\\\\\\\\\\\\\\\\\\\\\\\\\\\
%=========================================================================================================================================
%/////////////////////////////////////////////////////////////////////////////////////////////////////////////////////////////////////////

\section{Introduction and Motivation}

Throughout this article, we consider a path connected topological space $X$ with a base point $x_{0}\in{X}$. Given a pointed topological space $(X,x_{0})$, we denote the set of all paths in $X$ starting at $x_{0}$ by $P(X,x_{0})$. Let $H$ be a subgroup of $\pi_{1}(X,x_{0})$ and $\widetilde{X}_{H}= P(X,x_{0})/ \sim$, where $\alpha \sim \beta$ if and only if $\alpha(1)=\beta(1)$ and $[\alpha\ast\beta^{-1}]\in{H}$. The equivalence class of $\alpha$ is denoted by $[\alpha]_{H}$.  We denote the constant path at $x_{0}$ by $c_{x_{0}}$. Note that $[\alpha]_{H} = [c_{x_{0}}]_{H}$ if and only if $[\alpha]\in{H}$. The map $p_{H}: \widetilde{X}_{H}\rightarrow X$ is defined to be the endpoint projection $p_{H}([\alpha]_{H})=\alpha(1)$. We denote $p_{e}$ and $\widetilde{X}_{e}$ instead of $p_{H}$ and $  \widetilde{X}_{H}$, respectively, when $H$ is the trivial subgroup.

There are three famous topologies on $\widetilde{X}_{H}$. One of them is the quotient topology induced by the compact-open topology on  $P(X,x_{0})$. We denote the space $\widetilde{X}_{H}$ equipped with this topology by $\widetilde{X}^{top}_{H}$. The second topology is the whisker topology which was introduced by Spanier \cite[Theorem 2.5.13]{Spanier} and named by Brodskiy et al. \cite{Bcovering}, as follows.

 \begin{definition}
 For any pointed topological space $(X, x_{0})$ the whisker topology on the set $\widetilde{X}_{H}$ is defined by the collection of all the following sets as a basis
 $$B_{H}([\alpha]_{H}, U)=\lbrace [\beta]_{H}\in{\widetilde{X}_{H}} \ \vert \ \beta \simeq \alpha\ast\lambda \ for \ some \ \lambda :I \rightarrow U, \ \lambda(0)=\alpha(1)\rbrace,$$
 where $[\alpha]_{H}\in{\widetilde{X}_{H}} $ and $U$ is an open neighborhood of $\alpha(1)$. In the case $H=1$, we denote the basis elements of $\widetilde{X}_{e}$ by $B([\alpha], U)$.
 \end{definition}

The Spanier group $\pi(\mathcal{U}, x)$ \cite{Spanier} with respect to an open cover
$\mathcal{U} = \lbrace U_{i} \ | \ i \in{I} \rbrace$ is defined to be the subgroup of $\pi_{1}(X, x)$ which contains all homotopy classes having representatives of the type
$\prod_{j=1}^{n} \alpha_{j}\beta_{j}\alpha^{-1}_{j}$, where $\alpha_{j}$'s are arbitrary paths starting at $x$ and each $\beta_{j}$ is a loop inside one of the
open sets $U_{j}\in{\mathcal{U}}$.

The next topology is lasso topology which has been introduced and studied in \cite{Bcovering}.

\begin{definition}\label{1.2}
 For any pointed topological space $(X, x_{0})$ the lasso topology on the set $\widetilde{X}_{H}$ is defined by the collection of all the following sets as a basis

$$B_{H}([\alpha]_{H},\mathcal{U}, U)=\lbrace [\beta]_{H}\in{\widetilde{X}_{H}} \ \vert \ \beta \simeq \alpha\ast\gamma\ast\delta \ for \ some \ [\gamma]\in{\pi(\mathcal{U}, \alpha(1))} $$
$$\ \ \ \ \  \ \  and \ for \ some \ \delta:I\rightarrow U, \ \delta(0)=\alpha(1)\rbrace,$$

where $[\alpha]_{H}\in{\widetilde{X}_{H}}$, $\mathcal{U}$ is an open cover of $X$ and $U\in{\mathcal{U}}$ is an open neighborhood of $\alpha(1)$.  In the case $H=1$, we denote the basis elements of $\widetilde{X}_{e}$ by $B([\alpha],\mathcal{U}, U)$.
\end{definition}

We denote the space $\widetilde{X}_{H}$ equipped with whisker and lasso topologies by $\widetilde{X}^{wh}_{H}$ and $\widetilde{X}^{l}_{H}$, respectively. Moreover, we denote $\widetilde{X}^{top}_{e}$, $\widetilde{X}^{wh}_{e}$ and $\widetilde{X}^{l}_{e}$ instead of $\widetilde{X}^{top}_{H}$, $\widetilde{X}^{wh}_{H}$ and $\widetilde{X}^{l}_{H}$, respectively, when $H$ is the trivial subgroup. Note that $\pi_{1}^{qtop}(X,x_{0})$ and $\pi_{1}^{wh}(X,x_{0})$ and $\pi_{1}^{l}(X,x_{0})$ can be considered as subspaces of $\widetilde{X}^{top}_{e}$, $\widetilde{X}^{wh}_{e}$ and $\widetilde{X}^{l}_{e}$, respectively. The relation between these three different topologies are as follows, when $X$ is a connected, locally path connected space (see \cite{VZcom}).

\begin{center}
``$\widetilde{X}^{wh}_{e}$ is finer than $\widetilde{X}^{top}_{e}$'' and  `` $\widetilde{X}^{top}_{e}$ is finer than  $\widetilde{X}^{l}_{e}$''
\end{center}
Note that similar statements to the above hold for $\widetilde{X}_{H}$ when $H$ is a nontrivial subgroup (see \cite{BrazG}).

Small loop transfer (SLT for short) spaces were introduced for the first time by Brodskiy et al. \cite[Definition 4.7]{BroU}. The main motivation of the definition of SLT spaces is to determine the condition for coincidence of the compact-open topology and the whisker topology on $\widetilde{X}_{e}$. Indeed, Brodskiy et al. \cite[Theorems 4.11, 4.12]{BroU} proved that a locally path connected space $X$ is an SLT space if and only if for every $x\in{X}$, $\widetilde{X}^{top}_{e}=\widetilde{X}^{wh}_{e}$. Also, they defined a strong version of this notion, strong SLT space \cite[Definition 4.18]{BroU}, and showed that a path connected space $X$ is a strong SLT space if and only if for every $x\in{X}$, $\widetilde{X}^{l}_{e}=\widetilde{X}^{wh}_{e}$ \cite[Proposition 4.19]{BroU}. Moreover, Pashaei et al. \cite{Pasha} introduced and studied SLT spaces with respect to a subgroups $H$ of $\pi_{1}(X,x_{0})$ ($H$-SLT for short) at point $x_{0}$, and using this notion, presented a condition for the coincidence of the whisker and the compact-open topology on $\widetilde{X}_{H}$. In this paper by introducing a relative version of strong small loop transfer spaces with respect to a subgroup $H$ of $\pi_{1}(X,x_{0})$ at a point $x_{0}$ (strong $H$-SLT at $x_{0}$), we are going to determine when the whisker and the lasso topologies on $\widetilde{X}_{H}$ are identical. Also,
 we study the relationship between covering and semicovering spaces of strong $H$-SLT spaces at a point in $X$.

\begin{definition}
Let $H$ be a subgroup of $\pi_{1}(X,x_{0}) $. A topological space $X$ is called strong $H$-small loop transfer (strong $H$-SLT for short) space at $x_{0}$ if for every $x\in X$ and for every open neighborhood $U$ of $X$ containing $x_{0}$ there is an open neighborhood $V$ containing $x$ such that for every loop $\beta: I\rightarrow V$ based at $x$ and  for every path $\alpha: I\rightarrow X$ from $x_{0}$ to $x$ there is a loop $\lambda: I\rightarrow U$ based at $x_{0}$ such that, $[\alpha \ast \beta \ast \alpha^{-1}]_{H}=[\lambda]_{H}$. Also, $X$ is called a strong $H$-SLT space if for every $x\in{X}$ and for every path $\delta$ from $x_{0}$ to $x$, $X$ is a strong $[\delta^{-1}H\delta]$-SLT space at $x$. Note that if $H$ is the trivial subgroup, then a strong $H$-SLT space is a strong SLT space.
\end{definition}

 It is well known that covering spaces of a path connected, locally path connected and semilocally simply connected space $X$ are classified by subgroups of the fundamental group $\pi_{1}(X,x_{0})$. When $X$ has more complicated local structure, there need not be a simply connected cover corresponding to the trivial subgroup. Many people have attempted to extend the covering-theoretic approach to more general spaces. A common approach is to designate those properties of a covering map which are assumed important. One of them is semicoverings \cite{BrazS} which are defined to be local homeomorphisms with continuous lifting of paths and homotopies which are related to topological group structures on fundamental groups \cite{BrazO, FischerC}. The other one is generalized universal coverings which were introduced by Fischer and Zastrow \cite{Zastrow} and provide combinatorial information about fundamental groups of spaces which are not semilocally simply connected such as the Hawaiian earring, the Menger curve, and the Sierpinski carpet.

The following theorem determines the existence of coverings for locally path connected spaces via Spanier groups \cite[Theorem 2.5.13]{Spanier}.

\begin{theorem}\label{1.4}
Let $X$ be connected, locally path connected and $H\leq \pi_{1}(X,x_{0})$. Then there exists a covering map $p:\widetilde{X}\rightarrow X$ with $p_{\ast}\pi_{1}(\widetilde{X},\tilde{x}_{0})=H$ if and only if   there is an open cover $\mathcal{U}$ of $X$ such that $\pi(\mathcal{U}, x_{0})\leq H$.
\end{theorem}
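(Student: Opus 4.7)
The plan is to establish the two implications separately via classical covering-theoretic arguments, where the reverse direction requires the explicit construction of a covering space $p_H:\widetilde{X}_H\rightarrow X$ with an appropriately chosen topology on $\widetilde{X}_H$.

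For the forward direction, suppose a covering map $p:\widetilde{X}\rightarrow X$ with $p_{\ast}\pi_{1}(\widetilde{X},\tilde{x}_{0})=H$ is given. Let $\mathcal{U}$ be the open cover of $X$ by evenly covered neighborhoods. It suffices to show that every generator $[\alpha\ast\beta\ast\alpha^{-1}]$ of $\pi(\mathcal{U},x_{0})$ lies in $H$, where $\alpha$ is a path from $x_{0}$ to some point $y$ and $\beta$ is a loop at $y$ inside some $U\in\mathcal{U}$. First lift $\alpha$ uniquely to $\tilde{\alpha}$ starting at $\tilde{x}_{0}$, ending at some $\tilde{y}\in p^{-1}(y)$. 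Because $U$ is evenly covered and $\beta$ lies in $U$, its lift $\tilde{\beta}$ starting at $\tilde{y}$ remains inside the sheet mapped homeomorphically onto $U$, hence is a loop at $\tilde{y}$. Therefore $\tilde{\alpha}\ast\tilde{\beta}\ast\tilde{\alpha}^{-1}$ is a loop at $\tilde{x}_{0}$, yielding $[\alpha\ast\beta\ast\alpha^{-1}]\in p_{\ast}\pi_{1}(\widetilde{X},\tilde{x}_{0})=H$.

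For the reverse direction, assume an open cover $\mathcal{U}$ with $\pi(\mathcal{U},x_{0})\leq H$. Using local path connectedness, refine $\mathcal{U}$ to an open cover $\mathcal{V}$ consisting of path-connected open sets, each contained in some element of $\mathcal{U}$. Then topologize $\widetilde{X}_{H}$ by taking the collection $\{B_{H}([\alpha]_{H},V):V\in\mathcal{V},\,\alpha(1)\in V\}$ as a basis, which is essentially a variant of the whisker topology restricted to neighborhoods drawn from $\mathcal{V}$. The endpoint projection $p_{H}$ is continuous and surjective by construction; what remains is to verify that it is a covering map with $(p_{H})_{\ast}\pi_{1}(\widetilde{X}_{H},[c_{x_{0}}]_{H})=H$.

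The main obstacle lies in invoking the Spanier condition to show that $p_{H}$ restricted to each basic open set is a homeomorphism onto $V$. Surjectivity follows from path connectedness of $V$. For injectivity, given paths $\lambda_{1},\lambda_{2}$ in $V$ from $\alpha(1)$ to a common endpoint, the loop $\lambda_{1}\ast\lambda_{2}^{-1}$ lies in $V\subseteq U$ for some $U\in\mathcal{U}$, so $[\alpha\ast(\lambda_{1}\ast\lambda_{2}^{-1})\ast\alpha^{-1}]\in\pi(\mathcal{U},x_{0})\leq H$, which forces $[\alpha\ast\lambda_{1}]_{H}=[\alpha\ast\lambda_{2}]_{H}$; this is precisely where the hypothesis is indispensable. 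The decomposition of $p_{H}^{-1}(V)$ into pairwise disjoint or coinciding basic neighborhoods $B_{H}([\alpha]_{H},V)$ reduces to a direct algebraic verification from the definition of $\sim$, and the computation $(p_{H})_{\ast}\pi_{1}(\widetilde{X}_{H},[c_{x_{0}}]_{H})=H$ follows by path lifting: a loop $\alpha$ at $x_{0}$ lifts to a path from $[c_{x_{0}}]_{H}$ to $[\alpha]_{H}$, which is a loop if and only if $[\alpha]\in H$.
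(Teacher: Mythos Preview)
Your argument is correct and is essentially the classical proof from Spanier's textbook. Note, however, that the paper does not supply its own proof of this statement: Theorem~\ref{1.4} is quoted as a known result with a citation to \cite[Theorem 2.5.13]{Spanier}, so there is no in-paper proof to compare against. What you have written is a faithful reconstruction of Spanier's original argument, so in that sense your approach coincides with the one the paper implicitly relies on.
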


Brazas \cite[Theorem 4.8]{BrazO} and Torabi et al. \cite[Theorem 2.1]{TorabiS} have addressed the existence of covering spaces of locally path connected via algebraic and topological structures of fundamental groups. For instance, in \cite[Theorem 2.1]{TorabiS} it was shown that any open normal subgroup in $\pi_{1}^{qtop}(X,x_{0})$ contains a Spanier group. In Section 2, we show the existence of covering spaces of a strong $H$-SLT space at point $x_{0}$ provided that $H$ is open in $\pi_{1}^{qtop}(X,x_{0})$. In fact, we prove that if $K$ is an open subgroup in $\pi_{1}^{qtop}(X,x_{0})$ containing $H$, then there is an open cover $\mathcal{U}$ of $X$ such that $\pi(\mathcal{U}, x_{0})$ is contained in $K$ when $X$ is a strong $H$-SLT space at $x_{0}$ (see Proposition \ref{2.1}).

It is obvious that every covering map $p:\widetilde{X} \rightarrow X$ is a semicovering map but not vice versa \cite{FischerC}. Brazas showed that if $X$ is a connected, locally path connected and semilocally simply connected space, then these two concepts are the same (see \cite[Corollary 7.2]{BrazS}). Moreover, Torabi et al. \cite[Theorem 4.4]{TorabiS} proved this fact for connected, locally path connected and semilocally small generated spaces. In Corollary \ref{2.3}, we show that if $p:(\widetilde{X},\tilde{x}_{0}) \rightarrow (X,x_{0})$ is a semicovering map with $p_{\ast}\pi_{1}(\widetilde{X},\tilde{x}_{0})=H\leq \pi_{1}(X,x_{0})$, then $p$ is a covering map when $X$ is a connected, locally path connected and strong $H$-SLT space at $x_{0}$. Consequently, we can show that every semicovering map is a covering map in strong SLT spaces at $x_{0}$. Recall that Brazas introduced the notion of $\textbf{lpc}_{0}$-covering maps in terms of unique lifting property \cite[Definition 5.3]{BrazG}. Note that $\textbf{lpc}_{0}$-covering maps were inspired by the concept of generalized universal covering maps which introduced by Fischer and Zastrow \cite{Zastrow}. By the definition, it can be easily seen that every covering map is an $\textbf{lpc}_{0}$-covering map. Since the fibers of a covering map are discrete, Example 4.15 in \cite{Zastrow} implies that an $\textbf{lpc}_{0}$-covering map is not necessarily a covering map. In Proposition \ref{2.5}, we prove that if $X$ is a strong $H$-SLT space at $x_{0}$, then an $\textbf{lpc}_{0}$-covering map $p:(\widetilde{X},\tilde{x}_{0}) \rightarrow (X,x_{0})$ with $p_{\ast}\pi_{1}(\widetilde{X},\tilde{x}_{0})=H$ is a covering map when the fiber $p^{-1}(x_{0})$ is finite. Finally, we address the relationship between some famous subgroups of $\pi_{1}(X,x_{0})$ in strong SLT and SLT spaces at $x_{0}$.

The aim of Section 3 is to clarify the relationship between the whisker topology and the lasso topology on $\widetilde{X}_{H}$. We show that these two topologies on $\widetilde{X}_{H}$ are identical if and only if the space $X$ is a strong $H$-SLT space when $H$ is a normal subgroup (see Theorem \ref{3.2}). Moreover, we show that if $X$ is strong $H$-SLT at $x_{0}$, then all open subsets of $\pi_{1}^{wh}(X,x_{0})$ and $\pi_{1}^{l}(X,x_{0})$  containing normal subgroup $H$ are the same. In the case that $H$ is not normal, we show that open subgroups of $\pi_{1}^{wh}(X,x_{0})$ and $\pi_{1}^{l}(X,x_{0})$ containing $H$ are the same (see Proposition \ref{3.6}). However, we prove that if $X$ is a strong $H$-SLT space at $x_{0}$, then closed normal subgroups of $\pi_{1}^{wh}(X,x_{0})$ and $\pi_{1}^{l}(X,x_{0})$ containing $H$ are the same. In Corollary \ref{3.11}, we show that a semicovering map can transfer the property of being strong SLT from its codomain to its domain. Finally, in order to justify the definition of strong $H$-SLT spaces, we give an example of an strong $H$-SLT space which is not strong SLT and consequently, it is not semilocally simply connected (see Example \ref{3.13}). Also, we give an example to show that some results of the paper do not necessarily hold, for instance Proposition \ref{3.6}, for $H$-SLT spaces at $x_{0}$ (see Example \ref{3.14}).

%%%%%%%%%%%%%%%%%%%%%%%%%%%%%%%%%%%%%%%%%%%%
%===========================================================
%/////////////////////////////////////////////////////////////////////////////////////////////////////////////////////
%----------------------------------------------------------------------------------------------------------------------

\section{Relationship Between Strong SLT Spaces and Covering Maps}

%%%%%%%%%%%%%%%%%%%%%%%%%%%%%%%%%%%%%%%%%%%%%%%
Since existence of covering maps have a significant relation with Spanier groups (see Theorem \ref{1.4}), it is interesting to find conditions under which for a subgroup $H$ of $\pi_{1}(X, x_{0})$  there is an open cover $\mathcal{U}$ of $X$ such that $\pi(\mathcal{U}, x_{0}) \leq H$. Recall that Torabi et al. in \cite[Theorem 2.1]{TorabiS} proved that if $H$ is an open normal subgroup of  $ \pi_{1}^{qtop}(X,x_{0})$, then there is a Spanier group which is contained in $H$. In the following proposition, we show that if $X$ is a strong $H$-SLT spaces at $x_{0}$, then any open subgroup of $ \pi_{1}^{qtop}(X,x_{0})$ containing $H$ contains a Spanier group.
%%%%%%%%%%%%%%%%%%%%%%%%%%%%%%%%%%%%%%%%%%%%%%%%

\begin{proposition}\label{2.1}
Let $H\leq \pi_{1}(X,x_{0})$ and $X$ be a connected, locally path connected and strong $H$-SLT space at $x_{0}$. If $K$ is an open subgroup of $ \pi_{1}^{qtop}(X,x_{0})$ containing $H$, then there is an open cover $\mathcal{U}$ of $X$ such that $\pi(\mathcal{U},x)\leq K$, i.e., there exists a covering map $p:\widetilde{X}\rightarrow X$ with $p_{\ast}\pi_{1}(\widetilde{X},\tilde{x}_{0})=K$.
\end{proposition}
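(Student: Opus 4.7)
The plan is to construct an open cover $\mathcal{U}$ of $X$ satisfying $\pi(\mathcal{U},x_{0})\leq K$ and then invoke Theorem \ref{1.4}. The openness of $K$ in $\pi_{1}^{qtop}(X,x_{0})$ will supply local control of homotopy classes near $x_{0}$, and the strong $H$-SLT property at $x_{0}$ will propagate that control across $X$.

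First I would extract a small open set around $x_{0}$ that ``certifies'' membership in $K$. Let $q:\Omega(X,x_{0})\to\pi_{1}^{qtop}(X,x_{0})$ denote the natural quotient map. Since $[c_{x_{0}}]\in K$ and $q^{-1}(K)$ is open in the compact-open topology on $\Omega(X,x_{0})$, some basic neighborhood $\bigcap_{i=1}^{n}\langle K_{i},W_{i}\rangle$ of $c_{x_{0}}$ lies inside $q^{-1}(K)$. Taking $U=\bigcap_{i=1}^{n}W_{i}$, which is open in $X$ with $x_{0}\in U$, any loop $\lambda:I\to U$ based at $x_{0}$ lies inside this basic neighborhood, hence $[\lambda]\in K$.

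Next, for every $y\in X$ I would invoke the strong $H$-SLT property at $x_{0}$ with point $y$ and the open set $U$ to produce an open neighborhood $V_{y}$ of $y$, and then, using local path connectedness, shrink it to a path-connected open neighborhood of $y$ still denoted $V_{y}$. Setting $\mathcal{U}=\{V_{y}\mid y\in X\}$ gives an open cover of $X$. A typical generator of $\pi(\mathcal{U},x_{0})$ has the form $[\alpha\ast\beta\ast\alpha^{-1}]$, where $\alpha$ runs from $x_{0}$ to some $z$ and $\beta$ is a loop at $z$ contained in some $V_{y}\in\mathcal{U}$ (so $z\in V_{y}$). Choose a path $\gamma$ in $V_{y}$ from $y$ to $z$, and set $\mu=\gamma\ast\beta\ast\gamma^{-1}$, a loop at $y$ inside $V_{y}$, and $\alpha'=\alpha\ast\gamma^{-1}$, a path from $x_{0}$ to $y$. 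Because $\alpha'\ast\mu\ast(\alpha')^{-1}\simeq\alpha\ast\beta\ast\alpha^{-1}$, the strong $H$-SLT conclusion applied to $\mu$ and $\alpha'$ supplies a loop $\lambda:I\to U$ based at $x_{0}$ with $[\alpha\ast\beta\ast\alpha^{-1}]_{H}=[\lambda]_{H}$, equivalently $[\alpha\ast\beta\ast\alpha^{-1}][\lambda]^{-1}\in H\subseteq K$. Combined with $[\lambda]\in K$ from the previous step, this forces $[\alpha\ast\beta\ast\alpha^{-1}]\in K$. Therefore $\pi(\mathcal{U},x_{0})\leq K$, and Theorem \ref{1.4} produces the required covering map.

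The main obstacle I foresee is the mismatch between the basepoint $z=\alpha(1)$ of the loop $\beta$ appearing in a Spanier generator and the distinguished center $y$ of the neighborhood $V_{y}$ supplied by the hypothesis, whose transfer conclusion is phrased only for loops based at $y$. Using local path connectedness to conjugate $\beta$ by a path $\gamma$ inside $V_{y}$ from $y$ to $z$ is the move that bridges this gap; without this trick the strong $H$-SLT data would control only loops based at the chosen centers, which is insufficient to bound the whole Spanier group.
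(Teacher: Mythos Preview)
Your proof is correct and follows essentially the same approach as the paper's own argument: extract $U$ from a subbasic compact-open neighborhood of $c_{x_{0}}$ inside $q^{-1}(K)$, cover $X$ by the (path connected) neighborhoods $V_{y}$ supplied by the strong $H$-SLT property, and show every Spanier generator lands in $K$ via $H\subseteq K$ and $[\lambda]\in K$. In fact you are more explicit than the paper about the conjugation by $\gamma$ inside $V_{y}$ needed to pass from a loop based at an arbitrary $z\in V_{y}$ to one based at the center $y$; the paper absorbs this into the phrase ``since all elements of $\mathcal{U}$ are path connected, it is not hard to see\ldots''.
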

\begin{proof}
Let $K$ be an open subgroup of $ \pi_{1}^{qtop}(X,x_{0})$ containing $H$. By the definition of the quotient topology $ \pi_{1}^{qtop}(X,x_{0})$, there is an open basis neighborhood  $W=\bigcap_{i=1}^{n}\langle I_{i}, U_{i}\rangle$ of the constant path $c_{x_{0}}$ in $\Omega(X,x_{0})=\lbrace \alpha \ \vert \ \alpha(0)=\alpha(1)=x_{0}\rbrace$ such that $W\subseteq \pi^{-1}(K)$, where $\pi: \Omega(X,x_{0}) \rightarrow \pi_{1}(X,x_{0})$ is the quotient map defined by $\pi(\alpha)=[\alpha]$. Put $U=\bigcap_{i=1}^{n} U_{i}$. By the structure of $W$, we have $x_{0}\in{U}$ and so $U\neq\emptyset$. Since $X$ is a connected, locally path connected and strong $H$-SLT space at $x_{0}$, for every $x \in{X}$ there is a path connected open neighborhood $V$ containing $x$ such that for every loop $\beta: I\rightarrow V$ based at $x$ and  for every path $\alpha: I\rightarrow X$ from $x_{0}$ to $x$, there is a loop $\lambda: I\rightarrow U$ based at $x_{0}$ such that $[\alpha \ast \beta \ast \alpha^{-1}]_{H}=[\lambda]_{H}$. Assume $\mathcal{U}$ is the open cover of $X$ consists of all neighborhoods $V$'s. We show that the Spanier group $\pi(\mathcal{U},x)$ with respect to the open cover $\mathcal{U}$ is contained in $K$. It is enough to show that this relation holds for any generator of $\pi(\mathcal{U},x)$. Let $[\alpha \ast \beta \ast \alpha^{-1}]$ be an arbitrary generator of $\pi(\mathcal{U},x)$. Since all elements of $\mathcal{U}$ are path connected, it is not hard to see that for any path $\alpha$ from $x_{0}$ to any $y\in{V}$ and for every loop $\beta$ inside $V$ based at $y$, there is a loop $\lambda$ inside $U$ such that $[\alpha \ast \beta \ast \alpha^{-1}]_{H}=[\lambda]_{H}$, that is, $[\alpha \ast \beta \ast \alpha^{-1}\ast \lambda^{-1}]\in{H}$ or equivalently, $[\alpha \ast \beta \ast \alpha^{-1}]\in H [\lambda]$. Since $Im(\lambda)\subseteq U$, we have $\lambda \in{W}$, i.e., $\pi(\lambda)=[\lambda]\in K$. On the other hand, since $H\leq K$, $[\alpha \ast \beta \ast \alpha^{-1}]\in K$. Hence the result holds.
\end{proof}
%%%%%%%%%%%%%%%%%%%%%%%%%%%%%%%%%%%%%%%%%%%%%%%%

Recall that for any subgroup $H$ of a group $G$, the core of $H$ in $G$, denoted by $H_{G}$, is defined to the join of all normal subgroups of $G$ that are contained in $H$. Note that $H_{G}=\bigcap_{g\in{G}} g^{-1}Hg$ is the largest normal subgroup of $G$ which contained in $H$. By the structure of quasitopological group $ \pi_{1}^{qtop}(X,x_{0})$, if $H_{\pi_{1}(X,x_{0})}$ is open in $ \pi_{1}^{qtop}(X,x_{0})$ then so is $H$, but the converse is not true in general. Note that if the openness of $H$ in $ \pi_{1}^{qtop}(X,x_{0})$ implies the openness of $H_{\pi_{1}(X,x_{0})}$, then Theorem 3.7 of \cite{TorabiS} implies that every semicovering map is a covering map. But there is a semicovering map which is not a covering map (see \cite[Example 3.8]{BrazS}). The following corollary shows that the converse holds in relative version of strong SLT spaces at one point.
%%%%%%%%%%%%%%%%%%%%%%%%%%%%%%%%%%%%%%%%%%%%%%%%

\begin{corollary}\label{2.2}
Let $X$ be a connected, locally path connected and strong H-SLT space at $x_{0}$. Then $H_{\pi_{1}(X,x_{0})}$ is open in $ \pi_{1}^{qtop}(X,x_{0})$  if and only if $H$ is an open subgroup in $ \pi_{1}^{qtop}(X,x_{0})$.
\end{corollary}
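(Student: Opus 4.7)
The $(\Leftarrow)$ direction is essentially free and is already flagged in the remarks preceding the corollary: in the quasitopological group $\pi_{1}^{qtop}(X,x_{0})$ left translation by any element is a homeomorphism, so if $H_{\pi_{1}(X,x_{0})}$ is open, then $H=\bigcup_{h\in H}h\,H_{\pi_{1}(X,x_{0})}$ is a union of open left cosets, hence open.

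For $(\Rightarrow)$, suppose $H$ is open in $\pi_{1}^{qtop}(X,x_{0})$. The plan is to apply Proposition \ref{2.1} with $K=H$ to produce an open cover $\mathcal{U}$ of $X$ such that $\pi(\mathcal{U},x_{0})\leq H$. The key observation is that any Spanier group is automatically normal in $\pi_{1}(X,x_{0})$: for any generator $[\alpha\ast\beta\ast\alpha^{-1}]$ and any $[\gamma]\in\pi_{1}(X,x_{0})$, the conjugate $[(\gamma\ast\alpha)\ast\beta\ast(\gamma\ast\alpha)^{-1}]$ is again of the prescribed form. Hence $\pi(\mathcal{U},x_{0})\leq H_{\pi_{1}(X,x_{0})}$.

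It then remains to verify that $\pi(\mathcal{U},x_{0})$ itself is open in $\pi_{1}^{qtop}(X,x_{0})$, since the coset argument of the first paragraph then forces $H_{\pi_{1}(X,x_{0})}$ to be open. Openness of Spanier groups under local path connectedness is a standard input: by Theorem \ref{1.4}, $\pi(\mathcal{U},x_{0})$ equals $q_{\ast}\pi_{1}(\widetilde{X}',\tilde{x}_{0}')$ for a covering map $q:\widetilde{X}'\to X$, and for a covering map the discreteness of $q^{-1}(x_{0})$ together with continuity of path lifting make the preimage of this subgroup under the quotient $\Omega(X,x_{0})\to\pi_{1}(X,x_{0})$ open. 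This openness of covering-induced subgroups is the main technical obstacle beyond Proposition \ref{2.1}; should one wish to avoid citing it, one would reprove it by a Lebesgue number argument applied to a representative loop $\beta$ with $[\beta]\in\pi(\mathcal{U},x_{0})$ and a path connected refinement of $\mathcal{U}$, showing that a basic open neighborhood $\bigcap_{i}\langle [t_{i-1},t_{i}],U_{i}\rangle$ of $\beta$ in $\Omega(X,x_{0})$ consists entirely of loops whose class also lies in $\pi(\mathcal{U},x_{0})$.
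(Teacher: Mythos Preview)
Your argument is correct and follows the same route as the paper: invoke Proposition \ref{2.1} to produce an open cover $\mathcal{U}$ with $\pi(\mathcal{U},x_{0})\leq H$, use normality of Spanier groups to place $\pi(\mathcal{U},x_{0})$ inside the core $H_{\pi_{1}(X,x_{0})}$, and then use openness of $\pi(\mathcal{U},x_{0})$ in $\pi_{1}^{qtop}(X,x_{0})$ to conclude. The paper simply asserts that $\pi(\mathcal{U},x_{0})$ is open (this is standard for locally path connected $X$), whereas you spell out a justification via covering maps or a Lebesgue-number argument; either is fine and neither changes the structure of the proof.

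One cosmetic point: you have the implication arrows reversed. In the biconditional as stated, $(\Rightarrow)$ is ``core open implies $H$ open'' (the easy direction), and $(\Leftarrow)$ is ``$H$ open implies core open'' (the substantive direction requiring Proposition \ref{2.1}). Swap the labels and the write-up matches the paper's proof.
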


\begin{proof}
It is easy to see that if $H_{\pi_{1}(X,x_{0})}$ is open in $ \pi_{1}^{qtop}(X,x_{0})$, then so is $H$.

To prove the other direction, by Proposition \ref{2.1}, there is an open cover $\mathcal{U}$ of $X$ such that $\pi(\mathcal{U},x)\leq H$. On the other hand, since $H_{\pi_{1}(X,x_{0})}$ is the largest normal subgroup of $ \pi_{1}(X,x_{0})$ contained in $H$, we have $\pi(\mathcal{U},x)\leq H_{\pi_{1}(X,x_{0})}$. Since $\pi(\mathcal{U},x)$  is an open subgroup and $H_{\pi_{1}(X,x_{0})}$ is a subgroup in $ \pi_{1}^{qtop}(X,x_{0})$,  $H_{\pi_{1}(X,x_{0})}$  is open in $ \pi_{1}^{qtop}(X,x_{0})$.
\end{proof}
%%%%%%%%%%%%%%%%%%%%%%%%%%%%%%%%%%%%%%%%%%%%%%%%

\begin{corollary}\label{2.3}
Let $H\leq \pi_{1}(X,x_{0})$ and $p:\widetilde{X}\rightarrow X$ be a semicovering map with $p_{\ast}(\widetilde{X},\tilde{x}_{0})=H$. If $X$ is a connected, locally path connected and strong $H$-SLT space at $x_{0}$, then $p$ is a covering map.
\end{corollary}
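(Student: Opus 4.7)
The plan is to manufacture, via Proposition \ref{2.1}, a genuine covering map realizing the subgroup $H$, and then identify it with the given semicovering $p$. The first step will be to verify that $H = p_\ast\pi_1(\widetilde{X},\tilde{x}_0)$ is actually open in $\pi_1^{qtop}(X,x_0)$. This uses only that $p$ is a local homeomorphism with continuous lifting of paths and homotopies: given any loop $\alpha$ whose class lies in $H$, its unique lift $\tilde\alpha$ terminates in an evenly covered open set $\widetilde{V}$ mapping homeomorphically to a neighborhood $V$ of $\alpha(1) = x_0$, and then every loop of the form $\alpha \ast \gamma$ with $\gamma \subset V$ lifts to a loop in $\widetilde{X}$. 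Translating this into a basic neighborhood of $[\alpha]$ in the quotient topology on $\pi_1(X,x_0)$ shows that $H$ is open in $\pi_1^{qtop}(X,x_0)$; this is essentially Brazas's characterization of the subgroups arising as images of semicovering maps.

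With this openness in hand, I would apply Proposition \ref{2.1} with $K = H$: since $X$ is connected, locally path connected, strong $H$-SLT at $x_0$, and $H$ is an open subgroup of $\pi_1^{qtop}(X,x_0)$ that contains itself, there exists an open cover $\mathcal{U}$ of $X$ with $\pi(\mathcal{U},x_0) \leq H$. By Theorem \ref{1.4} this produces an honest covering map $q:(\widetilde{Y},\tilde{y}_0) \to (X,x_0)$ satisfying $q_\ast\pi_1(\widetilde{Y},\tilde{y}_0) = H$.

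To finish, I would invoke the lifting criterion for semicoverings: since $p_\ast\pi_1(\widetilde{X},\tilde{x}_0) = H = q_\ast\pi_1(\widetilde{Y},\tilde{y}_0)$ and both $\widetilde{X}$ and $\widetilde{Y}$ are path connected and locally path connected (the latter inherited from $X$ through their local homeomorphism structure), the maps $p$ and $q$ factor through each other by basepoint-preserving lifts that are mutually inverse. Hence $\widetilde{X} \cong \widetilde{Y}$ as spaces over $X$, so $p$ is equivalent to the covering $q$ and is itself a covering map. The main obstacle I anticipate is the openness step of paragraph one: while well known from Brazas's semicovering theory, it is the only part that does not follow formally from material quoted in the excerpt. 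Once it is established, the rest is a straightforward combination of Proposition \ref{2.1}, Theorem \ref{1.4}, and the uniqueness of the space over $X$ classified by the subgroup $H$.
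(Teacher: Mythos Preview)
Your proof is correct but takes a somewhat different route from the paper's. Both begin identically: $H=p_\ast\pi_1(\widetilde{X},\tilde{x}_0)$ is open in $\pi_1^{qtop}(X,x_0)$ by Brazas's characterization of semicovering subgroups (this is exactly \cite[Theorem 3.5]{BrazO}, so your first paragraph is on the mark). From there the paper does not construct a competing covering $q$ and compare. Instead it passes through Corollary \ref{2.2}: the strong $H$-SLT hypothesis forces the \emph{core} $H_{\pi_1(X,x_0)}$ to be open in $\pi_1^{qtop}(X,x_0)$, and then invokes \cite[Theorem 3.7]{TorabiS}, which says that openness of the core already upgrades any semicovering with image $H$ to a covering. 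Your approach bypasses the core entirely: you feed $K=H$ directly into Proposition \ref{2.1}, obtain $\pi(\mathcal{U},x_0)\leq H$, build the classical covering $q$ via Theorem \ref{1.4}, and then identify $p$ with $q$ using the mutual lifting criteria for coverings and semicoverings. This is more self-contained (it avoids the external \cite[Theorem 3.7]{TorabiS} and the intermediate Corollary \ref{2.2}), at the cost of having to justify the equivalence $p\cong q$ via unique lifting; the paper's route is terser but leans on an outside result whose proof, unsurprisingly, contains essentially the same comparison argument you spell out.
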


\begin{proof}
Let $p:\widetilde{X}\rightarrow X$ be a semicovering map with $p_{\ast}(\widetilde{X},\tilde{x}_{0})=H$. By \cite[Theorem 3.5]{BrazO}, $H$ is an open subgroup in $\pi_{1}^{qtop}(X,x_{0})$. Hence, Corollary \ref{2.2} implies that $H_{\pi_{1}(X,x_{0})}$  is open in $ \pi_{1}^{qtop}(X,x_{0})$. Therefore, using \cite[Theorem 3.7]{TorabiS}, $p:\widetilde{X}\rightarrow X$  is a covering map.
\end{proof}
%%%%%%%%%%%%%%%%%%%%%%%%%%%%%%%%%%%%%%%%%%%%%%%%

%Spanier in \cite{Spanier} have proven that there is a one-to-one correspondence between its equivalent classes of connected covering spaces of $X$ and the conjugacy classes of subgroups of its fundamental group $\pi_{1}(X,x)$ when $X$ has universal covering space.
The classification of semicovering maps were given by Brazas in \cite{BrazO} when $X$ is connected and locally path connected. By the definition of a semicovering map, it can be observed that every covering map is a semicovering map but the converse does not hold, out of semilocally simply connected spaces \cite[Example 3.8]{BrazO}. The following corollary shows that the converse does hold in a category of spaces wider than semilocally simply connected spaces which is a direct consequence of Corollary \ref{2.3}.

\begin{corollary}
Let  $X$ be a connected, locally path connected and strong SLT space at $x_{0}$. Then every semicovering map is a covering map.
\end{corollary}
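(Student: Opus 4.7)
The plan is to reduce this corollary directly to Corollary \ref{2.3}. Given a semicovering map $p:(\widetilde{X},\tilde{x}_{0})\rightarrow (X,x_{0})$, I would set $H := p_{\ast}\pi_{1}(\widetilde{X},\tilde{x}_{0})\leq \pi_{1}(X,x_{0})$. The goal is then to show that the hypothesis of Corollary \ref{2.3} is satisfied for this particular $H$, after which the conclusion is immediate.

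The key observation I would record first is that being strong SLT at $x_{0}$ (the case $H=\{e\}$) implies being strong $H$-SLT at $x_{0}$ for \emph{every} subgroup $H\leq\pi_{1}(X,x_{0})$. Indeed, strong SLT at $x_{0}$ gives, for each $x\in X$ and each open neighborhood $U$ of $x_{0}$, a neighborhood $V$ of $x$ such that for every loop $\beta$ in $V$ based at $x$ and every path $\alpha$ from $x_{0}$ to $x$ there is a loop $\lambda$ in $U$ with $[\alpha\ast\beta\ast\alpha^{-1}]=[\lambda]$ in $\pi_{1}(X,x_{0})$. Since equality in $\pi_{1}(X,x_{0})$ trivially forces equality in the coset space $\pi_{1}(X,x_{0})/H$, we get $[\alpha\ast\beta\ast\alpha^{-1}]_{H}=[\lambda]_{H}$, i.e., the strong $H$-SLT condition at $x_{0}$.

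With this observation, the proof closes in one line: $X$ is connected, locally path connected and strong $H$-SLT at $x_{0}$ for the specific $H=p_{\ast}\pi_{1}(\widetilde{X},\tilde{x}_{0})$, so Corollary \ref{2.3} applies and $p$ is a covering map. There is no real obstacle here; the only thing worth being careful about is verifying that the implication ``strong SLT $\Rightarrow$ strong $H$-SLT'' is used \emph{at the same base point} $x_{0}$, which is precisely how the corollary is phrased.
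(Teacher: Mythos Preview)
Your proposal is correct and matches the paper's own argument: the paper simply records this corollary as ``a direct consequence of Corollary \ref{2.3}'', and the one observation needed to invoke it---that strong SLT at $x_{0}$ (the case $H=1$) implies strong $H$-SLT at $x_{0}$ for every $H\leq\pi_{1}(X,x_{0})$---is exactly what you spell out.
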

%%%%%%%%%%%%%%%%%%%%%%%%%%%%%%%%%%%%%%%%%%%%%%%

It turns out that every covering map is an $\textbf{lpc}_{0}$-covering map but not vice versa. For example, the \textit{Hawiian Earring} admits generalized universal covering space \cite[Proposition 3.6]{Zastrow} but does not admit simply covering space because it is not semilocally simply connected (see \cite[Corollary 2.5.14]{Spanier}).

The following proposition provides some conditions under which any $\textbf{lpc}_{0}$-covering map is a covering map.
\begin{proposition}\label{2.5}
Let $p:\widetilde{X}\rightarrow X$ be an $\textbf{lpc}_{0}$-covering map with $p_{\ast}\pi_{1}(\widetilde{X},\tilde{x}_{0})=H\leq \pi_{1}(X,x_{0})$. Let $X$ be a strong $H$-SLT space at $x_{0}$. If $\vert p^{-1}(x_{0})\vert < \infty$, then $p$ is a covering map.
\end{proposition}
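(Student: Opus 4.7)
The plan is to reduce the statement to Proposition~\ref{2.1}: once $H$ is shown to be open in $\pi_{1}^{qtop}(X,x_{0})$, that proposition (using the strong $H$-SLT hypothesis at $x_{0}$) will yield an open cover $\mathcal{U}$ of $X$ with $\pi(\mathcal{U},x_{0})\leq H$, and Theorem~\ref{1.4} will then produce a classical covering map $q:(Y,y_{0})\to (X,x_{0})$ with $q_{\ast}\pi_{1}(Y,y_{0})=H$. Since both $p$ and $q$ are $\textbf{lpc}_{0}$-covering maps corresponding to the same subgroup $H$, the unique lifting characterisation of such maps forces them to be equivalent, and $p$ inherits the local trivialisation of $q$.

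The real work is therefore to exhibit an open neighborhood $U$ of $x_{0}$ in $X$ such that every loop $\lambda :I\to U$ based at $x_{0}$ satisfies $[\lambda]\in H$, which is exactly openness of $H$ in $\pi_{1}^{qtop}(X,x_{0})$. I list the finite fiber as $p^{-1}(x_{0})=\lbrace \tilde{x}_{0},\tilde{x}_{1},\ldots,\tilde{x}_{n}\rbrace$. Using Hausdorffness of $\widetilde{X}$ (standard for $\textbf{lpc}_{0}$-coverings over a Hausdorff base) together with the finiteness of the fiber, choose pairwise disjoint open neighborhoods $\widetilde{W}_{0},\ldots ,\widetilde{W}_{n}$ of $\tilde{x}_{0},\ldots ,\tilde{x}_{n}$ respectively; in particular $\widetilde{W}_{0}\cap p^{-1}(x_{0})=\lbrace \tilde{x}_{0}\rbrace$.

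Now I would invoke continuity of the unique path lifting associated to $p$: the map $L:P(X,x_{0})\to P(\widetilde{X},\tilde{x}_{0})$ sending a path to its unique lift starting at $\tilde{x}_{0}$ is continuous for the compact-open topologies, and $L(c_{x_{0}})=c_{\tilde{x}_{0}}$. Since $\langle I,\widetilde{W}_{0}\rangle$ is an open neighborhood of $c_{\tilde{x}_{0}}$ in $P(\widetilde{X},\tilde{x}_{0})$, continuity of $L$ at $c_{x_{0}}$ produces an open neighborhood $U$ of $x_{0}$ in $X$ so small that every path $\alpha :I\to U$ starting at $x_{0}$ lifts into $\widetilde{W}_{0}$. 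Applied to a loop $\lambda :I\to U$ based at $x_{0}$, this forces $\tilde{\lambda}(1)\in \widetilde{W}_{0}\cap p^{-1}(x_{0})=\lbrace \tilde{x}_{0}\rbrace$, so $\tilde{\lambda}$ is a loop at $\tilde{x}_{0}$ and $[\lambda]\in p_{\ast}\pi_{1}(\widetilde{X},\tilde{x}_{0})=H$. This establishes the openness of $H$ in $\pi_{1}^{qtop}(X,x_{0})$ and closes the reduction.

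The delicate step is precisely the bridge between smallness of the image in $X$ and smallness of the lift in $\widetilde{X}$, since $\textbf{lpc}_{0}$-coverings need not be local homeomorphisms or open maps. The finite-fiber hypothesis is essential here: it is exactly what allows Hausdorff separation of $\tilde{x}_{0}$ from the other fiber points by an open set, so that $\lbrace \tilde{x}_{0}\rbrace$ becomes isolated in $p^{-1}(x_{0})$ and the continuous lifting on path spaces can do the required work. Without finite fiber (as in the Hawaiian earring example mentioned before the statement) $\tilde{x}_{0}$ need not be isolated in its fiber and the whole argument collapses, which is consistent with the fact that the generalised universal cover of the Hawaiian earring is not a covering map.
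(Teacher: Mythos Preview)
Your overall plan coincides with the paper's: show that $H$ is open in a suitable topology on $\pi_{1}(X,x_{0})$, feed this into Proposition~\ref{2.1} to get $\pi(\mathcal{U},x_{0})\leq H$, and then conclude via Theorem~\ref{1.4} that $p$ (being equivalent to the classical covering with image $H$) is a covering map. The paper executes the opening step by identifying $p$ with $p_{H}:\widetilde{X}_{H}^{wh}\to X$, observing that the finite Hausdorff fiber $(p_{H}^{-1}(x_{0}))^{wh}\cong \pi_{1}^{wh}(X,x_{0})/H$ is discrete, so $H$ is open in $\pi_{1}^{wh}(X,x_{0})$, and then invoking \cite[Proposition~3.7]{Pasha} (which uses the strong $H$-SLT hypothesis) to upgrade this to openness in $\pi_{1}^{qtop}(X,x_{0})$.

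There is, however, a genuine gap in your argument at the step you yourself flag as delicate. You assert that the lifting map $L:P(X,x_{0})\to P(\widetilde{X},\tilde{x}_{0})$ is continuous in the compact-open topologies. This is \emph{false} for $\textbf{lpc}_{0}$-coverings in general: continuous lifting of paths is essentially the defining feature of semicoverings, and there are $\textbf{lpc}_{0}$-coverings which are not semicoverings (the generalised universal cover of the Hawaiian earring being the standard example). Concretely, $ev_{1}\circ L$ is the map $\alpha\mapsto[\alpha]_{H}$ from $P(X,x_{0})$ to $\widetilde{X}_{H}^{wh}$, and its continuity would force the whisker topology to be coarser than the quotient of the compact-open topology, whereas in fact it is finer. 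What you actually need is only continuity of $L$ at the single point $c_{x_{0}}$ against the specific neighborhood $\langle I,\widetilde{W}_{0}\rangle$, and this \emph{can} be checked by hand once one passes to the whisker model: shrinking $\widetilde{W}_{0}$ to a basic set $B_{H}([c_{x_{0}}]_{H},U)$, every path $\alpha:I\to U$ has $L(\alpha)(t)=[\alpha_{t}]_{H}\in B_{H}([c_{x_{0}}]_{H},U)$. But that verification is precisely the paper's argument in disguise, and you have not supplied it. A secondary point: you appeal to Hausdorffness of $\widetilde{X}$ ``over a Hausdorff base'', but $X$ carries no Hausdorff hypothesis here; only Hausdorffness of the \emph{fiber} is needed (and available, cf.\ \cite[Corollary~3.10]{Pasha}), which already suffices to isolate $\tilde{x}_{0}$ in $p^{-1}(x_{0})$.
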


\begin{proof}
Let $p:\widetilde{X}\rightarrow X$ be an $\textbf{lpc}_{0}$-covering map with $p_{\ast}\pi_{1}(\widetilde{X},\tilde{x}_{0})=H$. In \cite[Lemma 5.10]{BrazG} it was shown that $p$ is equivalent to the endpoint projection map $p_{H}:\widetilde{X}_{H}^{wh}\rightarrow X$. Moreover, it was shown that the fiber $p^{-1}(x_{0})$ is Hausdorff and hence $(p_{H}^{-1}(x_{0}))^{wh}$ is Hausdorff (see \cite[Corollary 3.10]{Pasha}). On the other hand, since $\vert p^{-1}(x_{0})\vert < \infty$, we have $\vert p_{H}^{-1}(x_{0})\vert < \infty$. Thus, $(p_{H}^{-1}(x_{0}))^{wh}$ is discrete. It is not hard to see that $(p_{H}^{-1}(x_{0}))^{wh}$ agrees with $\frac{\pi_{1}^{wh}(X,x_{0})}{H}$ \cite[p. 246]{Pasha}. Therefore, the discreteness of $(p_{H}^{-1}(x_{0}))^{wh}$ implies that $\frac{\pi_{1}^{wh}(X,x_{0}}{H}$ is discrete, that is, $H$ is open in $\pi_{1}^{wh}(X,x_{0})$. Moreover, since $X$ is a strong $H$-SLT at $x_{0}$, by \cite[Proposition 3.7]{Pasha}, $H$ is open in $\pi_{1}^{qtop}(X,x_{0})$. Consequently, by Proposition \ref{2.1}, there exists an open cover $\mathcal{U}$ of $X$ such that $\pi(\mathcal{U},x_{0})\leq H$ which implies that $p:\widetilde{X}\rightarrow X$ is a covering map (see Proposition \ref{1.4}).
\end{proof}
%%%%%%%%%%%%%%%%%%%%%%%%%%%%%%%%%%%%%%%%%%%%%%%

%%%%%%%%%%%%%%%%%%%%%%%%%%%%%%%%%%%%%%%%%%%%%%%%
In what follows in this section we investigate the relationship between some subgroups of $\pi_{1}(X,x_{0})$ in strong SLT and SLT spaces. Based on some works of \cite{Ab, Mashayekhy, Virk} there is a chain of some effective subgroups of the fundamental group $\pi_{1}(X,x_{0})$ as follows:

\begin{center}
$\pi_{1}^{s}(X,x_{0})\leq \pi_{1}^{sg}(X,x_{0})\leq \widetilde{\pi}_{1}^{sp}(X,x_{0})\leq \pi_{1}^{sp}(X,x_{0}),\ \ \ \ (\star)$
\end{center}
where $\pi_{1}^{s}(X,x_{0})$ is the subgroup of all small loops at $x_{0}$ \cite{Virk}, $\pi_{1}^{sg}(X,x_{0})$ is the subgroup of all small generated loops, i.e., the subgroup generated by the set
of $\lbrace [\alpha\ast\beta\ast\alpha^{-1}] \ \vert \ [\beta]\in{\pi_{1}^{sg}(X,\alpha(1))}\ and \ \alpha\in{P(X,x_{0})}\rbrace$. Also, $\pi_{1}^{sp}(X,x_{0})$ is the Spanier group of $X$, the intersection of the Spanier subgroups relative to open covers of $X$ \cite[Definition 2.3]{Fischerexam}, and $\widetilde{\pi}_{1}^{sp}(X,x_{0})$ is the path Spanier group, i.e., the
intersection of all path Spanier subgroups $\widetilde{\pi}(\mathcal{V}, x_{0})$ \cite[Definition 3.1]{TorabiS}, where $\mathcal{V}$ is a path open cover of $X$.
%%%%%%%%%%%%%%%%%%%%%%%%%%%%%%%%%%%%%%%%%%%%%%

Recall that Jamali et al. in \cite[Proposition 3.2]{Jamali} proved that the equality of the first inequality of the above chain holds in SLT spaces at $x_{0}$. In the following we are going to present some conditions for the equality of the others inequalities.

%%%%%%%%%%%%%%%%%%%%%%%%%%%%%%%%%%%%%%%%%%%%%%
\begin{theorem}\label{2.6}
Let $H$ be a subgroup of $\pi_{1}(X,x_{0})$ which is contained in $\pi_{1}^{s}(X,x_{0})$. If $X$ is an $H$-SLT space at $x_{0}$, then $\pi_{1}^{s}(X,x_{0})=\widetilde{\pi}_{1}^{sp}(X,x_{0})$.
\end{theorem}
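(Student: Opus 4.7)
By the chain $(\star)$ already recorded in the excerpt, the inclusion $\pi_1^s(X, x_0) \leq \widetilde{\pi}_1^{sp}(X, x_0)$ is automatic, so the plan is to prove only the reverse inclusion. I would fix $[\gamma] \in \widetilde{\pi}_1^{sp}(X, x_0)$ and an arbitrary open neighborhood $U$ of $x_0$; the objective is to produce a loop at $x_0$ whose image lies in $U$ and whose class equals $[\gamma]$, which by definition places $[\gamma]$ in $\pi_1^s(X, x_0)$.

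The first key step is to build a path open cover tailored to $U$. Invoking $H$-SLT at $x_0$, for each $x \in X$ and each path $\alpha$ from $x_0$ to $x$ I would select an open neighborhood $V_\alpha$ of $x$ such that every loop $\beta : I \to V_\alpha$ at $x$ satisfies $[\alpha \ast \beta \ast \alpha^{-1}]_H = [\lambda]_H$ for some loop $\lambda : I \to U$ at $x_0$. The family $\mathcal{V}_U = \{V_\alpha\}$ then qualifies as a path open cover in the sense of \cite[Definition 3.1]{TorabiS}, giving $\widetilde{\pi}_1^{sp}(X, x_0) \leq \widetilde{\pi}(\mathcal{V}_U, x_0)$. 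Consequently $[\gamma]$ decomposes as $[\gamma] = \prod_{j=1}^n [\alpha_j \ast \beta_j \ast \alpha_j^{-1}]$, and applying the chosen property to each factor produces loops $\lambda_j : I \to U$ based at $x_0$ and elements $h_j \in H$ with $[\alpha_j \ast \beta_j \ast \alpha_j^{-1}] = h_j [\lambda_j]$. Because $H \leq \pi_1^s(X, x_0)$, each $h_j$ is already the class of a small loop.

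The remaining step is to merge these pieces into a single loop inside $U$. Here I would exploit the fact that $\pi_1^s(X, x_0)$ is a normal subgroup of $\pi_1(X, x_0)$ (Virk) to push each $h_j$ to the left of the product $\prod_j h_j [\lambda_j]$, at the cost of conjugating it by intervening $[\lambda_i]$ but leaving it inside $\pi_1^s(X, x_0)$. This yields a single element $s \in \pi_1^s(X, x_0)$ with $[\gamma] = s \cdot [\lambda_1 \ast \cdots \ast \lambda_n]$. Since $s$ is the class of a small loop, it admits some representative $\nu$ with $\nu(I) \subseteq U$, and then the concatenation $\nu \ast \lambda_1 \ast \cdots \ast \lambda_n$ is a loop at $x_0$ lying entirely in $U$ and representing $[\gamma]$. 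As $U$ was arbitrary, $[\gamma] \in \pi_1^s(X, x_0)$.

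The step I expect to be the main obstacle is the construction of $\mathcal{V}_U$ together with its use to decompose $[\gamma]$. The open neighborhoods $V_\alpha$ furnished by $H$-SLT at $x_0$ are path-dependent rather than merely point-dependent, so fitting them into the definition of a path open cover and, more importantly, into a generating set for $\widetilde{\pi}(\mathcal{V}_U, x_0)$ requires a careful match with the precise formulation in \cite[Definition 3.1]{TorabiS}. The normality-driven rearrangement in the final paragraph is routine once $\pi_1^s(X, x_0) \triangleleft \pi_1(X, x_0)$ is in hand, but the decomposition step is exactly what forces the hypothesis of $H$-SLT at $x_0$ (which permits path-dependent $V_\alpha$) rather than any weaker or unrelated local condition.
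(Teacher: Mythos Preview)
Your proposal is correct and follows essentially the same route as the paper: construct a path open cover $\mathcal{V}_U$ from the $H$-SLT property, use $\widetilde{\pi}_1^{sp}(X,x_0)\leq \widetilde{\pi}(\mathcal{V}_U,x_0)$ to replace $[\gamma]$ (modulo $H$) by a loop in $U$, and finish via $H\leq\pi_1^s(X,x_0)$. The paper compresses your decomposition-and-rearrangement step into the single assertion that $[\lambda]_H=[\lambda_U]_H$ for some $\lambda_U:I\to U$; your explicit use of the normality of $\pi_1^s(X,x_0)$ to push the $h_j$'s through the product is exactly the detail needed to justify that assertion for arbitrary (not just generating) elements of $\widetilde{\pi}(\mathcal{V}_U,x_0)$.
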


\begin{proof}
By the chain ($\star$), it is sufficient to show that $\widetilde{\pi}_{1}^{sp}(X,x_{0})\leq\pi_{1}^{s}(X,x_{0})$. Consider $[\lambda]\in{\widetilde{\pi}_{1}^{sp}(X,x_{0})}$. We show that $\lambda$ is a small loop. Let $U$ be an open neighborhood of $X$ containing $x_{0}$. By assumption, since $X$ is an $H$-SLT space at $x_{0}$, there is a path Spanier subgroup $\widetilde{\pi}(\mathcal{V},x_{0})$ such that for $[\lambda]\in{\widetilde{\pi}_{1}^{sp}(X,x_{0})}\leq \widetilde{\pi}(\mathcal{V},x_{0})$ there is a loop $\lambda_{U}: I \rightarrow U$ based at $x_{0}$ such that $[\lambda]_{H}=[\lambda_{U}]_{H}$, i.e., $[\lambda\ast \lambda_{U}^{-1}]\in{H}$. On the other hand, since $H\leq \pi_{1}^{s}(X,x_{0})$, there is a small loop $h: I\rightarrow U$ based at $x_{0}$ such that $[\lambda\ast \lambda_{U}^{-1}]=[h]$, that is, $[\lambda]=[h\ast \lambda_{U}]$ which implies that $[\lambda]\in{\pi_{1}^{s}(X,x_{0})}$. Consequently, $\pi_{1}^{s}(X,x_{0})=\widetilde{\pi}_{1}^{sp}(X,x_{0})$.
\end{proof}

 %Since $H$ is a normal subgroup of $\pi_{1}(X,x_{0})$, we have $\frac{\pi_{1}^{s}(X,x_{0})}{H}\leq\frac{\widetilde{\pi}_{1}^{sp}(X,x_{0})}{H}$. So, it is enough to show that $\frac{\widetilde{\pi}_{1}^{sp}(X,x_{0})}{H}\leq\frac{\pi_{1}^{s}(X,x_{0})}{H}$. Consider $H[\lambda]\in{\frac{\widetilde{\pi}_{1}^{sp}(X,x_{0})}{H}}$, where $[\lambda]\in{\widetilde{\pi}_{1}^{sp}(X,x_{0})}$. By the definition of $H$-SLT space at $x_{0}$ \cite[Definition 2.2]{Pasha} and normality of $H$, for each open neighborhood $U$ containing $x_{0}$ there is a path Spanier subgroup $\widetilde{\pi}(\mathcal{V},x_{0})$ so that for every element $[\gamma]\in{\widetilde{\pi}(\mathcal{V},x_{0})}$ there is a loop $\mu_{\gamma}:I\rightarrow U$ based at $x_{0}$ such that $[\gamma]\in{H[\mu_{\gamma}]}$. By the above chain, since $\widetilde{\pi}_{1}^{sp}(X,x_{0})\leq \widetilde{\pi}(\mathcal{V},x_{0})$, $[\lambda]\in{\widetilde{\pi}(\mathcal{V},x_{0})}$ and accordingly, there is a loop $\mu_{\lambda}$ based at $x_{0}$ in $U$ such that $[\lambda]\in{H[\mu_{\lambda}]}$, that is, $[\lambda]=[h\ast \mu_{\lambda}]$ for some  $[h]\in{H}\leq \pi_{1}^{s}(X,x_{0})$ is a small loop having a homotopy representative $h'$ in $U$. It is easy to see that  $h'\ast \mu_{\lambda}$ is a loop in $U$ based at $x_{0}$. Therefore, it can be followed that $H[\lambda]=H[\sigma]$, where $[\sigma]\in{\pi_{1}^{s}(X,x_{0})}$. Consequently, $H[\lambda]\in{\frac{\pi_{1}^{s}(X,x_{0})}{H}}$.

%%%%%%%%%%%%%%%%%%%%%%%%%%%%%%%%%%%%%%%%%%%%%%

\begin{corollary}
Let $X$ be an SLT space at $x_{0}$. Then $\pi_{1}^{s}(X,x_{0})=\widetilde{\pi}_{1}^{sp}(X,x_{0})$.
\end{corollary}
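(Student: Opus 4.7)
The plan is to obtain this as an immediate specialization of Theorem \ref{2.6} by choosing $H$ to be the trivial subgroup of $\pi_{1}(X,x_{0})$. Observe that the definition of an SLT space at $x_{0}$ (in the sense of Brodskiy et al., as recalled in the introduction) coincides with the definition of an $H$-SLT space at $x_{0}$ when $H$ is trivial, paralleling the analogous remark made in the excerpt for the strong version. Hence the hypothesis that $X$ is SLT at $x_{0}$ is exactly the hypothesis that $X$ is $\{1\}$-SLT at $x_{0}$.

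Next I would verify the other hypothesis of Theorem \ref{2.6}, namely that the chosen subgroup is contained in $\pi_{1}^{s}(X,x_{0})$. This is automatic for the trivial subgroup, since $1\in\pi_{1}^{s}(X,x_{0})$ always holds. Therefore both hypotheses of Theorem \ref{2.6} are satisfied with $H=\{1\}$, and applying it directly yields $\pi_{1}^{s}(X,x_{0})=\widetilde{\pi}_{1}^{sp}(X,x_{0})$.

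Because the argument is essentially a specialization, there is no genuine obstacle; the only thing to be careful about is to state clearly that ``SLT at $x_{0}$'' is recovered from ``$H$-SLT at $x_{0}$'' at the trivial subgroup, so that no circularity or implicit use of a stronger hypothesis creeps in. Once this identification is made explicit, the conclusion is obtained in a single line from Theorem \ref{2.6}.
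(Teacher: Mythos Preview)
Your proposal is correct and matches the paper's approach: the corollary is stated immediately after Theorem \ref{2.6} with no separate proof, so it is understood to follow by taking $H$ to be the trivial subgroup, exactly as you do. Your care in noting that ``SLT at $x_{0}$'' is the $H=\{1\}$ case of ``$H$-SLT at $x_{0}$'' and that the trivial subgroup lies in $\pi_{1}^{s}(X,x_{0})$ is appropriate and is all that is needed.
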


%%%%%%%%%%%%%%%%%%%%%%%%%%%%%%%%%%%%%%%%%%%%%%
\begin{theorem}
Let $H$ be a subgroup of $\pi_{1}(X,x_{0})$ which is contained in $\pi_{1}^{s}(X,x_{0})$. If $X$ is a locally path connected strong $H$-SLT space at $x_{0}$, then $\pi_{1}^{s}(X,x_{0})={\pi}_{1}^{sp}(X,x_{0})$.
\end{theorem}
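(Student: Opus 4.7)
The plan is to mimic the proof of Theorem 2.6, but replace the invocation of the path Spanier group by an ordinary Spanier group, exploiting the fact that the strong $H$-SLT hypothesis is what is needed to trap arbitrary Spanier generators (not merely path Spanier generators) inside a prescribed neighborhood of $x_0$ modulo $H$. By the chain $(\star)$ it suffices to show $\pi_{1}^{sp}(X,x_{0})\leq\pi_{1}^{s}(X,x_{0})$. Fix $[\lambda]\in\pi_{1}^{sp}(X,x_{0})$ and an arbitrary open neighborhood $U$ of $x_0$; the goal is to produce a loop in $U$ representing $[\lambda]$.

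First, I would build an open cover $\mathcal{U}$ of $X$ tailored to $U$ by using the strong $H$-SLT property at $x_0$: for each $x\in X$ pick an open (and, using local path connectedness, path connected) neighborhood $V_x$ of $x$ such that for every loop $\beta:I\to V_x$ based at $x$ and every path $\alpha$ from $x_0$ to $x$, there is a loop $\lambda_U:I\to U$ based at $x_0$ with $[\alpha\ast\beta\ast\alpha^{-1}]_{H}=[\lambda_U]_{H}$. Since $[\lambda]\in\pi_{1}^{sp}(X,x_{0})\leq\pi(\mathcal{U},x_{0})$, write $[\lambda]=\prod_{j=1}^{n}[\alpha_{j}\ast\beta_{j}\ast\alpha_{j}^{-1}]$ with each $\beta_{j}$ a loop inside some $V_{j}\in\mathcal{U}$.

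Now I would process each generator using the choice above: for each $j$ there is a loop $\lambda_{U,j}$ in $U$ with $[\alpha_{j}\ast\beta_{j}\ast\alpha_{j}^{-1}\ast\lambda_{U,j}^{-1}]\in H\leq\pi_{1}^{s}(X,x_{0})$. By definition of $\pi_{1}^{s}$, this class has a representative $h_{j}$ that is a loop inside $U$, so $[\alpha_{j}\ast\beta_{j}\ast\alpha_{j}^{-1}]=[h_{j}\ast\lambda_{U,j}]$ is represented by a loop in $U$. Concatenating these over $j$ yields a single loop in $U$ homotopic to $\lambda$. As $U$ was an arbitrary neighborhood of $x_0$, $[\lambda]\in\pi_{1}^{s}(X,x_{0})$, completing the proof.

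The only delicate point I anticipate is keeping the bookkeeping clean when passing from the generator-by-generator statement to the whole product: one must observe that a finite concatenation of loops in $U$ is again a loop in $U$ (all based at $x_0\in U$), so the product of classes represented by $U$-loops is itself represented by a $U$-loop. Everything else is a direct transcription of the Theorem 2.6 argument with the path-open cover replaced by the ordinary open cover produced by the strong $H$-SLT hypothesis.
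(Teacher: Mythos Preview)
Your proposal is correct and is precisely the adaptation the paper has in mind: the paper's own proof reads in full ``The proof is similar to that of Theorem~\ref{2.6}'', and what you have written is exactly that adaptation---replacing the path Spanier cover by an ordinary open cover built from the strong $H$-SLT property (with local path connectedness used, as in Proposition~\ref{2.1}, to pass from the distinguished center of each $V_x$ to an arbitrary basepoint in $V_x$). The generator-by-generator bookkeeping you spell out is already implicit in the paper's argument for Theorem~\ref{2.6}.
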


\begin{proof}
The proof is similar to that of Theorem \ref{2.6}.
\end{proof}
%%%%%%%%%%%%%%%%%%%%%%%%%%%%%%%%%%%%%%%%%%%%%%
\begin{corollary}
Let $X$ be a locally path connected strong SLT space at $x_{0}$. Then $\pi_{1}^{s}(X,x_{0})={\pi}_{1}^{sp}(X,x_{0})$.
\end{corollary}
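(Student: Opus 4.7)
The plan is to deduce this corollary directly from the preceding theorem by specializing to $H=1$, the trivial subgroup of $\pi_{1}(X,x_{0})$. The two hypotheses needed are immediate: first, the trivial subgroup is vacuously contained in $\pi_{1}^{s}(X,x_{0})$; second, by the remark made at the end of the definition of strong $H$-SLT spaces, the notion of a strong SLT space at $x_{0}$ coincides with that of a strong $\{1\}$-SLT space at $x_{0}$. Hence, once the preceding theorem is applied with $H=\{1\}$, the conclusion $\pi_{1}^{s}(X,x_{0})=\pi_{1}^{sp}(X,x_{0})$ follows with no further argument.

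If instead one prefers a self-contained proof, the strategy parallels that of Theorem~\ref{2.6}. Since the chain $(\star)$ gives $\pi_{1}^{s}(X,x_{0})\leq \pi_{1}^{sp}(X,x_{0})$ automatically, the task reduces to proving the reverse inclusion. Given $[\lambda]\in \pi_{1}^{sp}(X,x_{0})$ and an arbitrary open neighborhood $U$ of $x_{0}$, I would invoke the strong SLT property at $x_{0}$ (together with local path connectedness) to produce an open cover $\mathcal{U}$ such that every generator $\alpha\ast\beta\ast\alpha^{-1}$ of $\pi(\mathcal{U},x_{0})$ is homotopic to a loop supported in $U$; this is the same mechanism used in the proof of Proposition~\ref{2.1}. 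Since $[\lambda]\in \pi(\mathcal{U},x_{0})$, it follows that $\lambda$ is homotopic to a loop in $U$, and as $U$ was arbitrary, $[\lambda]\in \pi_{1}^{s}(X,x_{0})$.

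There is no genuine obstacle here: the only subtlety is recognizing that the strong SLT hypothesis (as opposed to mere $H$-SLT) yields the Spanier-group statement needed to handle $\pi_{1}^{sp}$ rather than only the path-Spanier group $\widetilde{\pi}_{1}^{sp}$, which is precisely why local path connectedness appears in the hypothesis and why the conclusion of the preceding theorem upgrades from $\widetilde{\pi}_{1}^{sp}$ to $\pi_{1}^{sp}$.
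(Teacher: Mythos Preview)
Your proposal is correct and matches the paper's approach: the corollary is stated without proof immediately after the theorem whose hypotheses it specializes, so the intended argument is precisely to take $H=\{1\}$ in that theorem, exactly as you do. Your optional self-contained argument also correctly mirrors the proof strategy the paper indicates for the preceding theorem (``similar to that of Theorem~\ref{2.6}''), so there is nothing to add.
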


%-------------------------------------------------------------------------------------------------------------------------
%%%%%%%%%%%%%%%%%%%%%%%%%%%%%%%%%%%%%%%%%%%%%%
%=============================================================
%/////////////////////////////////////////////////////////////////////////////////////////////////////////////////////////
%-------------------------------------------------------------------------------------------------------------------------

\section{Relationship Between Open Subsets of $\pi_{1}^{wh}$ and $\pi_{1}^{l}$}

Brodskiy et al. \cite[Proposition 4.19]{BroU}) showed that there is a remarkable relation between the whisker topology and the lasso topology on $\widetilde{X}_{e}$ in strong SLT spaces. Indeed, they showed that $X$ is a strong SLT space if only if for every $x\in{X}$, $\widetilde{X}_{e}^{wh}=\widetilde{X}_{e}^{l}$. Similarly, it is of interest to determine when these topologies coincide on $\widetilde{X}_{H}$. In other words,

\begin{center}
``What is a necessary and sufficient condition for the equality $\widetilde{X}_{H}^{wh}=\widetilde{X}_{H}^{l}$?''
\end{center}

Pashaei et al. in \cite{Pasha} gave a necessary and sufficient condition for the coincidence of $\widetilde{X}_{H}^{wh}$ and $\widetilde{X}_{H}^{top}$. To answer the above question, we need the notion of strong $H$-SLT space. For a subgroup $H$ of $\pi_{1}(X, x_{0})$ we recall that $X$ is a strong $H$-SLT space if for every $x\in {X}$ and for every path $\delta$ from $x_{0}$ to $x$, $X$ is a strong $[\delta^{-1}H\delta]$-SLT space at $x$.
%%%%%%%%%%%%%%%%%%%%%%%%%%%%%%%%%%%%%%%%%%%%%%%
\begin{remark}\label{3.1}
Note that if X is a strong $H$-SLT space, then $X$ is a strong $[\delta^{-1} H\delta]$-SLT space for every path $\delta$ from $x_{0}$ to $x$. Let $X$ be a strong $H$-SLT space and $H$ be a normal subgroup of $\pi_{1}(X, x_{0})$. Consider the isomorphism $\varphi_{\delta}:\pi_{1}^{qtop}(X,x_{0}) \rightarrow \pi_{1}^{qtop}(X,x)$ defined by $\varphi_{\delta}([\beta])=[\delta^{-1}\ast\beta\ast\delta]$. Then the normality of $H$ implies that $\varphi_{\delta}(H)=[\delta^{-1}H\delta]$ is a normal subgroup of $\pi_{1}^{qtop}(X,x)$. Moreover, since $\varphi_{\delta}$ is a homeomorphism, openness of $H$ implies that $\varphi_{\delta}(H)=[\delta^{-1}H\delta]$ is open in $\pi_{1}^{qtop}(X,x)$.
\end{remark}
%%%%%%%%%%%%%%%%%%%%%%%%%%%%%%%%%%%%%%%%%%%%%%%
\begin{theorem}\label{3.2}
Let $X$ be a path connected space and $H$ be a normal subgroup of $\pi_{1}(X,x_{0})$. Then $X$ is a strong $H$-SLT space if and only if $\widetilde{X}^{l}_{[\delta^{-1}H\delta]}=\widetilde{X}^{wh}_{[\delta^{-1}H\delta]}$ for every $x\in{X}$ and for any path $\delta$ from $x_{0}$ to $x$.
\end{theorem}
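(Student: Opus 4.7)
The plan is to prove both directions by comparing bases of the two topologies. A routine check shows that, for any subgroup $K$, the whisker topology on $\widetilde{X}_{K}$ is at least as fine as the lasso topology: given any lasso basic set $B_{K}([\alpha]_{K},\mathcal{U},U)$ and any point $[\beta]_{K}$ in it, the whisker basic set $B_{K}([\beta]_{K},U)$ sits inside $B_{K}([\alpha]_{K},\mathcal{U},U)$. Hence the equality $\widetilde{X}^{wh}_{K}=\widetilde{X}^{l}_{K}$ (with $K=[\delta^{-1}H\delta]$) reduces to the reverse inclusion at basic opens: every whisker basic neighbourhood $B_{K}([\alpha]_{K},U)$ must contain a lasso basic neighbourhood of $[\alpha]_{K}$.

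For the forward direction, assume $X$ is strong $H$-SLT and fix $B_{K}([\alpha]_{K},U)$. Let $\eta=\delta\ast\alpha$ and $L=\eta^{-1}H\eta\leq\pi_{1}(X,\alpha(1))$. Normality of $H$ in $\pi_{1}(X,x_{0})$ gives normality of $L$ in $\pi_{1}(X,\alpha(1))$, and under the change-of-basepoint isomorphism along $\alpha$ the subgroup $L$ corresponds to $K$. By Remark \ref{3.1} and the definition of strong $H$-SLT space, $X$ is strong $L$-SLT at $\alpha(1)$. Apply this to the neighbourhood $U$ of $\alpha(1)$: for each $w\in X$ there is an open neighbourhood $V_{w}$ of $w$ (with $V_{\alpha(1)}$ chosen inside $U$, which is permitted since shrinking preserves the SLT conclusion) such that every loop $\theta$ in $V_{w}$ at $w$ and every path $\xi$ from $\alpha(1)$ to $w$ admit a loop $\psi_{\theta,\xi}$ in $U$ at $\alpha(1)$ with $[\xi\ast\theta\ast\xi^{-1}\ast\psi_{\theta,\xi}^{-1}]\in L$. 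Put $\mathcal{V}=\{V_{w}:w\in X\}$. I claim $B_{K}([\alpha]_{K},\mathcal{V},V_{\alpha(1)})\subseteq B_{K}([\alpha]_{K},U)$. For $[\beta]_{K}=[\alpha\ast\gamma\ast\mu]_{K}$ with $\gamma=\prod_{j=1}^{n}\xi_{j}\ast\theta_{j}\ast\xi_{j}^{-1}\in\pi(\mathcal{V},\alpha(1))$ and $\mu$ in $V_{\alpha(1)}$, pick the corresponding $\psi_{j}$ and set $\psi=\psi_{1}\ast\cdots\ast\psi_{n}$, a loop in $U$ at $\alpha(1)$. Since each $[\xi_{j}\ast\theta_{j}\ast\xi_{j}^{-1}][\psi_{j}]^{-1}\in L$, the classes $[\xi_{j}\ast\theta_{j}\ast\xi_{j}^{-1}]$ and $[\psi_{j}]$ coincide in the quotient group $\pi_{1}(X,\alpha(1))/L$; multiplying in this quotient (which is a group precisely because $L$ is normal) yields $[\gamma]\equiv[\psi]\pmod{L}$, so $[\gamma\ast\psi^{-1}]\in L$. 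Transferring along $\alpha$ gives $[\alpha\ast\gamma\ast\psi^{-1}\ast\alpha^{-1}]\in K$. Setting $\lambda=\psi\ast\mu$, a path in $U$ starting at $\alpha(1)$, then $[\alpha\ast\gamma\ast\mu\ast\lambda^{-1}\ast\alpha^{-1}]=[\alpha\ast\gamma\ast\psi^{-1}\ast\alpha^{-1}]\in K$, so $[\beta]_{K}=[\alpha\ast\lambda]_{K}\in B_{K}([\alpha]_{K},U)$.

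For the converse, fix $x\in X$, a path $\delta$ from $x_{0}$ to $x$, and $K=[\delta^{-1}H\delta]$; I will show $X$ is strong $K$-SLT at $x$. Given any neighbourhood $U$ of $x$, the whisker basic set $B_{K}([c_{x}]_{K},U)$ contains, by hypothesis, a lasso basic set $B_{K}([c_{x}]_{K},\mathcal{V},V_{x})$ with $V_{x}\in\mathcal{V}$ a neighbourhood of $x$. For any $y\in X$, select any $V_{y}\in\mathcal{V}$ containing $y$. Given a loop $\beta$ in $V_{y}$ at $y$ and any path $\alpha$ from $x$ to $y$, the loop $\alpha\ast\beta\ast\alpha^{-1}$ is a standard generator of $\pi(\mathcal{V},x)$; paired with the trivial tail $c_{x}$ in $V_{x}$, this places $[\alpha\ast\beta\ast\alpha^{-1}]_{K}$ inside $B_{K}([c_{x}]_{K},\mathcal{V},V_{x})\subseteq B_{K}([c_{x}]_{K},U)$, which furnishes a loop $\lambda$ in $U$ at $x$ with $[\alpha\ast\beta\ast\alpha^{-1}]_{K}=[\lambda]_{K}$; this is exactly the strong $K$-SLT condition at $x$.

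The main obstacle is the normality bookkeeping in the forward direction: promoting the per-generator transfers $[\xi_{j}\ast\theta_{j}\ast\xi_{j}^{-1}\ast\psi_{j}^{-1}]\in L$ to the single identity $[\gamma\ast\psi^{-1}]\in L$ for the concatenated $\psi$. This step hinges on $L$ being normal so that one can multiply the per-generator congruences coherently in the quotient $\pi_{1}(X,\alpha(1))/L$; without the hypothesis that $H$ (hence $L$) is normal, the factors $\psi_{j}^{-1}$ cannot be commuted past later terms modulo $L$, and the regrouping argument breaks. This is precisely why the theorem imposes the normality of $H$.
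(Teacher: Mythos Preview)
Your proof is correct and follows essentially the same approach as the paper: compare bases, use the strong $H$-SLT property at $\alpha(1)$ to build an open cover, and show the resulting lasso basic neighbourhood sits inside the given whisker basic neighbourhood, with the converse handled via the basic set at $[c_x]$. The only notable difference is cosmetic: where the paper carries out the normality bookkeeping by an explicit (and laborious) term-by-term rewriting of the product $\prod_i \alpha\ast\alpha_i\ast\gamma_i\ast\alpha_i^{-1}\ast\alpha^{-1}$, you package the same step more cleanly by passing to the quotient group $\pi_1(X,\alpha(1))/L$ and multiplying the per-generator congruences there.
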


\begin{proof}
Let $X$ be a strong $H$-SLT space. By definitions, it is routine to check that $\widetilde{X}^{wh}_{K}$ is finer than $\widetilde{X}^{l}_{K}$ for any subgroup $K \leq \pi_{1}(X,y)$. By Remark \ref{3.1},  it is sufficient to show that $\widetilde{X}^{l}_{H}$ is finer than $\widetilde{X}^{wh}_{H}$. Consider an open basis neighborhood $B_{H}([\alpha]_{H},U)$ in $\widetilde{X}^{wh}_{H}$, where $\alpha$ is a path from $x_{0}$ to $x$. We find an open basis neighborhood $B_{H}([\alpha]_{H}, \mathcal{U},W)$ in $\widetilde{X}^{l}_{H}$ which is contained in $B_{H}([\alpha]_{H},U)$. By the definition of strong $H$-SLT space, any point of $X$ has an open neighborhood $V$ defined by the strong $H$-SLT space property which is applied to the point $\alpha(1)=x$ and $U$, that is, for every loop $\gamma$ in $V$ and for every path $\sigma$ from $x$ to $\gamma(0)$ there is a loop $\lambda$ in $U$ based at $x$ such that $[\sigma\ast\gamma\ast\sigma^{-1}]_{[\alpha^{-1}H\alpha]}=[\lambda]_{[\alpha^{-1}H\alpha]}$. Let $\mathcal{U}$ be the open cover of $X$ consisting of all neighborhoods $V$'s. Put $W=U$  and consider $[\alpha\ast l \ast\beta]_{H}\in{B_{H}([\alpha]_{H}, \mathcal{U},U)}$. Assume $l$ is equal to a finite concatenation of loops $l=\prod_{i=1}^{n} \alpha_{i}\ast\gamma_{i}\ast\alpha^{-1}_{i}$, where  $\alpha_{i}$'s are paths from $x$ to $\alpha_{i}(1)$ and $\gamma_{i}$'s are loops in some $V\in{\mathcal{U}}$ based at $\alpha_{i}(1)$. Since $X$ is strong $H$-SLT, there are loops $\lambda_{i}$ in $U$ for $1\leq i\leq n$ such that $[\alpha_{i}\ast\gamma_{i}\ast\alpha^{-1}_{i}]_{[\alpha^{-1}H\alpha]}=[\lambda_{i}]_{[\alpha^{-1}H\alpha]}$, i.e.,  $[\alpha_{i}\ast\gamma_{i}\ast\alpha^{-1}_{i}\ast\lambda^{-1}_{i}]\in{[\alpha^{-1}H\alpha]}$ for $1\leq i\leq n$. We have $\alpha\ast\alpha_{i}\ast\gamma_{i}\ast\alpha_{i}^{-1}\ast\alpha^{-1}\simeq h_{i}$ rel $\dot{I}$, where $[h_{i}]\in{H}$ ($1\leq i\leq n$). It is enough to show that $\big[\alpha\ast\big(\prod_{i=1}^{n} \alpha_{i}\ast\gamma_{i}\ast\alpha^{-1}_{i}\big)\ast\beta\big]_{H}=\big[\alpha\ast\lambda\ast\beta\big]_{H}$, or equivalently, it is sufficient to show that $\big[\big(\prod_{i=1}^{n} \alpha\ast\alpha_{i}\ast\gamma_{i}\ast\alpha^{-1}_{i}\ast\alpha^{-1}\big)\ast \alpha\ast\lambda^{-1}\ast\alpha^{-1}\big]\in{H}$, where $\lambda=\lambda_{1}\ast\lambda_{2}\ast...\ast\lambda_{n}$ is a loop in $U$ based at $x$. Crearly, it can be seen that $\big[\big(\prod_{i=1}^{n} \alpha\ast\alpha_{i}\ast\gamma_{i}\ast\alpha^{-1}_{i}\ast\alpha^{-1}\big)\ast \alpha\ast\lambda^{-1}\ast\alpha^{-1}\big]=[(\alpha\ast\alpha_{1}\ast\gamma_{1}\ast\alpha_{1}^{-1}
\ast\lambda_{1}^{-1}\ast\alpha^{-1})\ast\alpha\ast\lambda_{1}\ast\alpha^{-1}\ast(\alpha\ast\alpha_{2}\ast\gamma_{2}\ast\alpha_{2}^{-1}\ast\lambda_{2}^{-1}
\ast\alpha^{-1})\ast\alpha\ast \lambda_{2}\ast\alpha^{-1}....\ast(\alpha\ast\alpha_{n}\ast\gamma_{n}\ast\alpha_{n}^{-1}\ast\lambda_{n}^{-1}\ast\alpha^{-1})\ast
\alpha\ast \lambda_{n}\ast\alpha^{-1}\ast(\alpha\ast\lambda_{n}^{-1}\ast\alpha^{-1})\ast(\alpha\ast\lambda_{n-1}^{-1}
\ast\alpha^{-1})\ast...\ast(\alpha\ast\lambda_{1}^{-1}\ast\alpha^{-1})]$. However, by the above observations, we have
$\big[\big(\prod_{i=1}^{n} \alpha\ast\alpha_{i}\ast\gamma_{i}\ast\alpha^{-1}_{i}\ast\alpha^{-1}\big)\ast \alpha\ast\lambda^{-1}\ast\alpha^{-1}\big]=[h_{1}\ast(\alpha\ast\lambda_{1}\ast\alpha^{-1})
\ast h_{2}\ast(\alpha\ast \lambda_{2}\ast\alpha^{-1})....\ast h_{n}\ast(\alpha\ast \lambda_{n}\ast\alpha^{-1})\ast(\alpha\ast\lambda_{n}^{-1}\ast\alpha^{-1})\ast(\alpha\ast\lambda_{n-1}^{-1}
\ast\alpha^{-1})\ast...\ast(\alpha\ast\lambda_{1}^{-1}\ast\alpha^{-1})]$. By normality of $H$, it is easy but laborious to obtain $[h]\in{H}$ such that $\big[\big(\prod_{i=1}^{n} \alpha\ast\alpha_{i}\ast\gamma_{i}\ast\alpha^{-1}_{i}\ast\alpha^{-1}\big)\ast \alpha\ast\lambda^{-1}\ast\alpha^{-1}\big]=[h_{1}\ast(\alpha\ast\gamma_{1}\ast\alpha^{-1})\ast h\ast(\alpha\ast\gamma_{1}^{-1}\ast\alpha^{-1})]$. Hence $\big[\big(\prod_{i=1}^{n} \alpha\ast\alpha_{i}\ast\gamma_{i}\ast\alpha^{-1}_{i}\ast\alpha^{-1}\big)\ast \alpha\ast\lambda^{-1}\ast\alpha^{-1}\big]=[h_{1}\ast h']$ which implies that $\big[\big(\prod_{i=1}^{n} \alpha\ast\alpha_{i}\ast\gamma_{i}\ast\alpha^{-1}_{i}\ast\alpha^{-1}\big)\ast \alpha\ast\lambda^{-1}\ast\alpha^{-1}\big]\in{H}$, where $[\alpha\ast\gamma_{1}\ast\alpha^{-1}\ast h\ast\alpha\ast\gamma_{1}^{-1}\ast\alpha^{-1}]=[h']$.

Conversely, let $x\in{X}$ and $\delta$ be a path from $x_{0}$ to $x$. It is enough to show that $X$ is a strong $[\delta^{-1}H\delta]$-SLT space at $x$. Let $U$ be an open neighborhood in $X$ containing $x$. Consider open basis neighborhood $B_{[\delta^{-1}H\delta]}\big([c_{x}]_{[\delta^{-1}H\delta]}, U\big)$ in $\widetilde{X}^{wh}_{[\delta^{-1}H\delta]}$. By $\widetilde{X}^{l}_{[\delta^{-1}H\delta]}=\widetilde{X}^{wh}_{[\delta^{-1}H\delta]}$, there is an open basis neighborhood $B_{[\delta^{-1}H\delta]}\big([c_{x}]_{[\delta^{-1}H\delta]}, \mathcal{U},W\big)$ in $\widetilde{X}^{l}_{[\delta^{-1}H\delta]}$ such that $B_{[\delta^{-1}H\delta]}\big([c_{x}]_{[\delta^{-1}H\delta]}, \mathcal{U},W\big)\subseteq B_{[\delta^{-1}H\delta]}\big([c_{x}]_{[\delta^{-1}H\delta]}, U\big)$. Pick $y\in{X}$. We know that there is an open neighborhood $V$ belonging to $\mathcal{U}$ such that $y\in{V}$. Let $\alpha$ be a path from $x$ to $y$ and $\beta$ be a loop in $V$ based at $y$. By the above relation, $[c_{x}\ast\alpha\ast\beta\ast\alpha^{-1}\ast w]_{[\delta^{-1}H\delta]}\in{B_{[\delta^{-1}H\delta]}\big([c_{x}]_{[\delta^{-1}H\delta]}, U\big)}$. Put $w=c_{x}$. Therefore, there is a loop $\lambda$ in $U$ based at $x$ such that $[\alpha\ast\beta\ast\alpha^{-1}]_{[\delta^{-1}H\delta]}=[\lambda]_{[\delta^{-1}H\delta]}$ which implies that $X$ is a strong $H$-SLT space.
\end{proof}

%%%%%%%%%%%%%%%%%%%%%%%%%%%%%%%%%%%%%%%%%%%%%%%%%
One can easily get the following corollary of Theorem \ref{3.2}.

\begin{corollary}\label{3.3}
Let $X$ be path connected and $H$ be any normal subgroup of $ \pi_{1}(X,x_{0}) $. Then $X$ is a strong $H$-SLT at $x_{0}$ if and only if $ (p^{-1}_H(x_0))^{wh}=(p^{-1}_H(x_0))^{l} $ or equivalently, $\frac{\pi_{1}^{wh}(X,x_{0})}{H}=\frac{\pi_{1}^{l}(X,x_{0})}{H}$.
\end{corollary}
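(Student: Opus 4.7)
The plan is to view this corollary as the one-fiber localization of Theorem \ref{3.2}. Under the canonical bijection $p_H^{-1}(x_0)\leftrightarrow\pi_1(X,x_0)/H$, the whisker (resp.\ lasso) subspace topology on the fiber agrees with the quotient topology on $\pi_1^{wh}(X,x_0)/H$ (resp.\ $\pi_1^l(X,x_0)/H$); this identification is the one recorded in \cite[p.~246]{Pasha} and it immediately yields the equivalence between the two formulations in the conclusion. So the work reduces to showing that $X$ is strong $H$-SLT at $x_0$ if and only if $(p_H^{-1}(x_0))^{wh}=(p_H^{-1}(x_0))^l$. The guiding observation is that ``strong $H$-SLT at $x_0$'' is precisely as much of the strong $H$-SLT space hypothesis as one needs to control whisker versus lasso on a single fiber, and that normality of $H$ collapses every conjugate $[\alpha^{-1}H\alpha]$ with $\alpha$ a loop at $x_0$ back to $H$.

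For the forward direction I would run the forward argument of Theorem \ref{3.2} verbatim, but only for whisker basis neighborhoods $B_H([\alpha]_H,U)$ with $\alpha(1)=x_0$. Because whisker is always finer than lasso, it suffices to exhibit a lasso basis neighborhood inside each such whisker neighborhood. Concretely, I would build the open cover $\mathcal{U}$ by choosing for each $y\in X$ the neighborhood $V_y$ furnished by the strong $H$-SLT at $x_0$ property applied to $y$ and $U$, take $W=U$, and then repeat the algebraic manipulation of Theorem \ref{3.2} to conclude $B_H([\alpha]_H,\mathcal{U},W)\cap p_H^{-1}(x_0)\subseteq B_H([\alpha]_H,U)\cap p_H^{-1}(x_0)$. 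Since $\alpha(1)=x_0$ and $H$ is normal, every invocation of ``strong $[\alpha^{-1}H\alpha]$-SLT at $\alpha(1)$'' in that proof becomes simply ``strong $H$-SLT at $x_0$'', which is exactly the available hypothesis.

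For the reverse direction I would test the assumed fiber equality at the identity class $[c_{x_0}]_H$. Given $x\in X$ and an open $U\ni x_0$, the hypothesis produces a lasso basis neighborhood $B_H([c_{x_0}]_H,\mathcal{U},W)\cap p_H^{-1}(x_0)$ contained in $B_H([c_{x_0}]_H,U)\cap p_H^{-1}(x_0)$. I would then pick any $V\in\mathcal{U}$ with $x\in V$ and claim this $V$ witnesses strong $H$-SLT at $x_0$: for any path $\alpha$ from $x_0$ to $x$ and any loop $\beta$ in $V$ based at $x$, the choices $\gamma=\alpha\ast\beta\ast\alpha^{-1}\in\pi(\mathcal{U},x_0)$ and $\delta=c_{x_0}$ certify that $[\alpha\ast\beta\ast\alpha^{-1}]_H\in B_H([c_{x_0}]_H,\mathcal{U},W)$, so by the inclusion it lies in $B_H([c_{x_0}]_H,U)$, producing the required loop $\lambda$ in $U$ with $[\alpha\ast\beta\ast\alpha^{-1}]_H=[\lambda]_H$.

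The main obstacle I anticipate is just checking that normality really does cleanly collapse the long concatenation bookkeeping of the forward direction of Theorem \ref{3.2}. Once $\alpha(1)=x_0$, the intermediate conjugated loops $\alpha\ast\lambda_i\ast\alpha^{-1}$ are themselves loops at $x_0$, and normality of $H$ lets them be rearranged modulo $H$ with no new subgroup-theoretic obligations appearing; the remaining steps are then a transcription of Theorem \ref{3.2} specialized to the single fiber over $x_0$, and the reverse direction is essentially formal once the correct $\gamma$ and $\delta$ are exhibited.
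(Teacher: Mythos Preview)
Your proposal is correct and matches the paper's intent: the paper offers no explicit argument for Corollary~\ref{3.3}, merely stating that it follows easily from Theorem~\ref{3.2}, and your write-up is precisely the natural localization of that proof to the single fiber over $x_0$, together with the fiber/quotient identification already invoked elsewhere in the paper via \cite[p.~246]{Pasha}. Your observation that normality of $H$ makes $[\alpha^{-1}H\alpha]=H$ for any loop $\alpha$ at $x_0$ is exactly the reason why only the ``at $x_0$'' hypothesis is needed, and your reverse direction (testing at $[c_{x_0}]_H$ and choosing $\gamma=\alpha\ast\beta\ast\alpha^{-1}$, $\delta=c_{x_0}$) is the direct specialization of the converse half of Theorem~\ref{3.2} to $\delta=c_{x_0}$.
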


%%%%%%%%%%%%%%%%%%%%%%%%%%%%%%%%%%%%%%%%%%%%

Using Corollary \ref{3.3}, one of the main result of this section can be concluded as follows.

\begin{corollary}
Let $ H $ be any normal subgroup of $ \pi_{1}(X,x_{0})$. If $ X $ is a path connected strong $H$-SLT space at $x_{0}$, then any subset $U$ of $\pi_{1}(X,x_{0})$ containing $H$ is open in $\pi_{1}^{wh}(X,x_{0})$ if and only if it is open in $\pi_{1}^{l}(X,x_{0})$.
\end{corollary}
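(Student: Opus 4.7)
The plan is to derive this corollary directly from Corollary \ref{3.3} by working with the canonical quotient maps and their topological properties. Both $\pi_{1}^{wh}(X,x_{0})$ and $\pi_{1}^{l}(X,x_{0})$ are quasitopological groups, so left and right translations by any fixed element are homeomorphisms. This implies that the quotient maps $q_{wh}\colon\pi_{1}^{wh}(X,x_{0})\to\pi_{1}^{wh}(X,x_{0})/H$ and $q_{l}\colon\pi_{1}^{l}(X,x_{0})\to\pi_{1}^{l}(X,x_{0})/H$ are open surjections: for any open set $A$, the saturation $q^{-1}(q(A))=AH=\bigcup_{h\in H}Ah$ is a union of translates of $A$ and hence open, so $q(A)$ is open in the quotient.

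Next I would invoke Corollary \ref{3.3}, which, under the strong $H$-SLT hypothesis and the normality of $H$, identifies $\pi_{1}^{wh}(X,x_{0})/H$ with $\pi_{1}^{l}(X,x_{0})/H$ as topological spaces. Since $U$ contains the normal subgroup $H$, I would record the set-theoretic identity $U=UH=q_{wh}^{-1}(q_{wh}(U))=q_{l}^{-1}(q_{l}(U))$, so that $U$ coincides with the preimage of its image under either quotient map. With this in hand, if $U$ is open in $\pi_{1}^{wh}(X,x_{0})$ then $q_{wh}(U)$ is open in $\pi_{1}^{wh}(X,x_{0})/H$ by openness of $q_{wh}$; Corollary \ref{3.3} then identifies this with an open subset of $\pi_{1}^{l}(X,x_{0})/H$, whose preimage $q_{l}^{-1}(q_{l}(U))=U$ is open in $\pi_{1}^{l}(X,x_{0})$ by continuity of $q_{l}$. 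The reverse direction runs symmetrically with the roles of $wh$ and $l$ swapped.

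The main obstacle is the step in which $U$ is recognized as $H$-saturated, so that $U$ genuinely equals $q^{-1}(q(U))$; this is the only place where the hypothesis $H\subseteq U$ is essentially used, and once it is secured, the rest of the proof is a formal chase around the quotient diagram, combining openness and continuity of the two projection maps with the homeomorphism provided by Corollary \ref{3.3}. Consequently the corollary is obtained with essentially no additional work beyond Corollary \ref{3.3} itself.
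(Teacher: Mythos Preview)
Your overall strategy matches the paper's, which simply records that the corollary follows from Corollary~\ref{3.3}. However, there is a genuine gap at the step you yourself flag as the main obstacle: the assertion that $H\subseteq U$ implies $U=UH$ (equivalently, that $U$ is $H$-saturated, i.e.\ $U=q^{-1}(q(U))$) is false. For a transparent counterexample, in $G=\mathbb{Z}$ with $H=2\mathbb{Z}$ the set $U=2\mathbb{Z}\cup\{1\}$ contains $H$ but $UH=\mathbb{Z}\neq U$. The hypothesis $H\subseteq U$ only guarantees that the single coset $H$ lies in $U$; it does not force every coset meeting $U$ to lie entirely in $U$. Without saturation the quotient argument collapses: $q^{-1}(q(U))=HU$ may strictly contain $U$, so openness of $q(U)$ in the quotient only yields openness of $HU$, not of $U$, in the other topology. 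Your argument is valid precisely when $U$ is a union of $H$-cosets (for instance when $U$ is a subgroup containing $H$), and this is presumably the hypothesis implicitly intended in the paper's one-line deduction from Corollary~\ref{3.3} as well.

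A secondary technical point: $\pi_1^{wh}(X,x_0)$ is not known to be a quasitopological group in general; right translation and inversion need not be continuous in the whisker topology. What you actually use---openness of $q_{wh}$---still holds, because \emph{left} translations are homeomorphisms of $\pi_1^{wh}(X,x_0)$ (they send $B([\alpha],V)$ to $B([\gamma\ast\alpha],V)$), and by normality of $H$ one has $q_{wh}^{-1}(q_{wh}(A))=HA=\bigcup_{h\in H}hA$, a union of left translates of $A$. So this part of your argument can be repaired, but the saturation issue above cannot be bypassed with the hypothesis as stated.
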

%%%%%%%%%%%%%%%%%%%%%%%%%%%%%%%%%%%%%%%%%%%%%

The following proposition shows that the property of being strong $H$-SLT is a necessary condition for the openness of the subgroup $H$ in $\pi_{1}^{l}(X,x_{0})$.

%%%%%%%%%%%%%%%%%%%%%%%%%%%%%%%%%%%%%%%%%%%%%
\begin{proposition}
Let $H$ be an open subgroup of $\pi_{1}^{l}(X,x_{0})$. Then $X$ is strong $H$-SLT space.
\end{proposition}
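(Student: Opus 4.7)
The plan is to extract from the openness of $H$ in $\pi_{1}^{l}(X,x_{0})$ a Spanier subgroup of $X$ that sits inside $H$, and then observe that such a subgroup is strong enough to furnish the required loops $\lambda$ for the strong $H$-SLT condition; in fact $\lambda$ may be taken to be the constant loop.

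First, I would unpack the openness hypothesis. Since $\pi_{1}^{l}(X,x_{0})$ is a subspace of $\widetilde{X}^{l}_{e}$ and $[c_{x_{0}}]\in H$, the openness of $H$ provides a basic lasso neighborhood $B([c_{x_{0}}],\mathcal{V},V_{0})$ of $[c_{x_{0}}]$ in $\widetilde{X}^{l}_{e}$, for some open cover $\mathcal{V}$ of $X$ and some $V_{0}\in\mathcal{V}$ with $x_{0}\in V_{0}$, such that $B([c_{x_{0}}],\mathcal{V},V_{0})\cap\pi_{1}(X,x_{0})\subseteq H$. A direct comparison with Definition \ref{1.2} shows that this intersection equals the Spanier group $\pi(\mathcal{V},x_{0})$: if $\beta$ is a loop at $x_{0}$ with $\beta\simeq\gamma\ast\delta$ for $[\gamma]\in\pi(\mathcal{V},x_{0})$ and $\delta$ a loop in $V_{0}$ at $x_{0}$, then $[\delta]\in\pi(\mathcal{V},x_{0})$ (because $V_{0}\in\mathcal{V}$), so $[\beta]\in\pi(\mathcal{V},x_{0})$, and conversely any $[\gamma]\in\pi(\mathcal{V},x_{0})$ can be written as $[\gamma\ast c_{x_{0}}]$. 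Consequently $\pi(\mathcal{V},x_{0})\leq H$.

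Next, I would verify the strong $H$-SLT condition. Fix $x\in X$ and a path $\delta$ from $x_{0}$ to $x$; I must show that $X$ is strong $[\delta^{-1}H\delta]$-SLT at $x$. So let $z\in X$ and let $U$ be an open neighborhood of $x$. Choose $V\in\mathcal{V}$ containing $z$; this will be the neighborhood required by the definition. Given any loop $\beta\colon I\to V$ based at $z$ and any path $\alpha\colon I\to X$ from $x$ to $z$, the concatenation $(\delta\ast\alpha)\ast\beta\ast(\delta\ast\alpha)^{-1}$ is a loop at $x_{0}$ of the standard generating form for the Spanier group with respect to $\mathcal{V}$, hence $[(\delta\ast\alpha)\ast\beta\ast(\delta\ast\alpha)^{-1}]\in\pi(\mathcal{V},x_{0})\leq H$. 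Equivalently, $[\alpha\ast\beta\ast\alpha^{-1}]\in[\delta^{-1}H\delta]$, so taking $\lambda=c_{x}$ (which trivially maps into $U$) yields $[\alpha\ast\beta\ast\alpha^{-1}]_{[\delta^{-1}H\delta]}=[\lambda]_{[\delta^{-1}H\delta]}$, as required.

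The argument is essentially a bookkeeping exercise once the right ``pullback'' of the lasso basic open set is recognized as a Spanier group; the only step that requires genuine care is the identification $B([c_{x_{0}}],\mathcal{V},V_{0})\cap\pi_{1}(X,x_{0})=\pi(\mathcal{V},x_{0})$, which I expect to be the main (and rather mild) obstacle. After that point, the observation that the constant loop already works for $\lambda$ makes the verification immediate and shows that no local structure beyond the existence of the cover $\mathcal{V}$ is needed.
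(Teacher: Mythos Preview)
Your proof is correct and follows essentially the same approach as the paper: both use a lasso basic neighborhood $B([c_{x_{0}}],\mathcal{U},W)\subseteq H$ to produce an open cover $\mathcal{U}$, then verify the strong $[\delta^{-1}H\delta]$-SLT condition at each $x$ by showing $[(\delta\ast\alpha)\ast\beta\ast(\delta\ast\alpha)^{-1}]\in H$ and taking $\lambda=c_{x}$. Your explicit identification $B([c_{x_{0}}],\mathcal{V},V_{0})\cap\pi_{1}(X,x_{0})=\pi(\mathcal{V},x_{0})$ is a clean way to phrase what the paper does implicitly when it puts $w=c_{x_{0}}$.
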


\begin{proof}
Since $H$ is an open subgroup of $\pi_{1}^{l}(X,x_{0})$, there is an open basis neighborhood $B([c_{x_{0}}], \mathcal{U}, W)$ in $\pi_{1}^{l}(X,x_{0})$ which is contained in $H$. Let $\delta$ be a path from $x_{0}$ to $x$ and $U$ be an open neighborhood containing $x$. Pick $y\in{X}$. By the definition of open cover $\mathcal{U}$, there is an open neighborhood $V$ in $\mathcal{U}$ containing $y$. Let $\alpha$ be a path from $x$ to $y$ and $\beta$ be a loop inside $V$ based at $y$. By the definition of $B([c_{x_{0}}], \mathcal{U}, W)$, we have $[c_{x_{0}}\ast\delta\ast\alpha\ast\beta\ast\alpha^{-1}\ast\delta^{-1}\ast w]\in{H}$. Put $w=c_{x_{0}}$. Hence, $[\delta\ast\alpha\ast\beta\ast\alpha^{-1}\ast\delta^{-1}]\in{H}$ or equivalently, $[\alpha\ast\beta\ast\alpha^{-1}]\in{[\delta^{-1}H\delta]}$, that is, $[\alpha\ast\beta\ast\alpha^{-1}]_{[\delta^{-1}H\delta]}=[c_{x}]_{[\delta^{-1}H\delta]}$. Therefore, $X$ is a strong $H$-SLT space.
\end{proof}
%%%%%%%%%%%%%%%%%%%%%%%%%%%%%%%%%%%%%%%%%%%%%
\begin{proposition}\label{3.6}
Let $H\leq \pi_{1}(X,x_{0})$ and $X$ be a locally path connected strong $H$-SLT space at $x_{0}$. Then $H$ is open in $\pi_{1}^{wh}(X,x_{0})$ if and only if $H$ is open in $\pi_{1}^{l}(X,x_{0})$.
\end{proposition}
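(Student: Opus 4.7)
The forward implication ($\pi_1^l$-open $\Rightarrow$ $\pi_1^{wh}$-open) is automatic: the whisker topology is finer than the lasso topology on $\widetilde{X}_e$, hence also on $\pi_1(X,x_0)$ as a subspace. The work is in the converse.

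Assume $H$ is open in $\pi_1^{wh}(X,x_0)$. Then there is an open (path-connected, by local path connectedness) neighborhood $U$ of $x_0$ such that the basic whisker neighborhood $B([c_{x_0}], U) \cap \pi_1(X,x_0)$ lies in $H$; equivalently, every loop $\mu:I\to U$ at $x_0$ satisfies $[\mu]\in H$. Apply the strong $H$-SLT at $x_0$ hypothesis to this $U$: for each $x\in X$ choose a path-connected open neighborhood $V_x$ of $x$ such that for every loop $\beta$ in $V_x$ at $x$ and every path $\alpha$ from $x_0$ to $x$ there is a loop $\mu$ in $U$ at $x_0$ with $[\alpha\ast\beta\ast\alpha^{-1}]_H=[\mu]_H$. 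Combined with $[\mu]\in H$ this yields $[\alpha\ast\beta\ast\alpha^{-1}]\in H$.

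Let $\mathcal{U}=\{V_x:x\in X\}$. I first show $\pi(\mathcal{U},x_0)\subseteq H$. A typical generator is $[\alpha\ast\beta\ast\alpha^{-1}]$ where $\beta$ is a loop in some $V_x$ based at $y:=\alpha(1)\in V_x$. Pick a path $\eta$ in $V_x$ from $x$ to $y$ and set $\alpha'=\alpha\ast\eta^{-1}$ and $\beta'=\eta\ast\beta\ast\eta^{-1}$. Then $\alpha'$ is a path from $x_0$ to $x$, $\beta'$ is a loop in $V_x$ based at $x$, and a direct homotopy shows $[\alpha\ast\beta\ast\alpha^{-1}]=[\alpha'\ast\beta'\ast(\alpha')^{-1}]$. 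By the preceding observation the latter lies in $H$, so every generator of $\pi(\mathcal{U},x_0)$ lies in $H$.

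Since $V_{x_0}\in\mathcal{U}$, and applying the strong $H$-SLT property at $x_0$ with $\alpha=c_{x_0}$ shows that every loop in $V_{x_0}$ based at $x_0$ represents an element of $H$, I consider the basic lasso neighborhood $B([c_{x_0}],\mathcal{U},V_{x_0})\cap\pi_1$: each element is represented by some $\gamma\ast\delta$ with $[\gamma]\in\pi(\mathcal{U},x_0)\subseteq H$ and $\delta$ a loop in $V_{x_0}$ at $x_0$, hence $[\gamma\ast\delta]\in H$. The same computation applied to $B([\eta],\mathcal{U},V_{x_0})\cap\pi_1$ for arbitrary $[\eta]\in H$ exhibits a lasso-open neighborhood of $[\eta]$ contained in $H$, so $H$ is open in $\pi_1^l(X,x_0)$. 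The main obstacle is the mismatch between the basepoint of a loop appearing in a Spanier generator (an arbitrary point of $V_x$) and the specific basepoint $x$ required by the strong $H$-SLT property; local path connectedness is precisely what allows us to align them by conjugating inside $V_x$.
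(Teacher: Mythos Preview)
Your argument is correct and follows the same overall strategy as the paper: produce an open cover $\mathcal{U}$ with $\pi(\mathcal{U},x_0)\le H$, and then observe that a basic lasso neighborhood $B([\eta],\mathcal{U},W)\cap\pi_1(X,x_0)$ lies in $H$ for each $[\eta]\in H$. The difference is in how the cover is obtained. The paper first invokes \cite[Proposition~3.7]{Pasha} to pass from openness in $\pi_1^{wh}$ to openness in $\pi_1^{qtop}$ (using that strong $H$-SLT at $x_0$ implies $H$-SLT at $x_0$), and then applies Proposition~\ref{2.1} to extract the cover from a compact--open basic neighborhood. You bypass $\pi_1^{qtop}$ entirely: from a single whisker-basic $U$ with $i(U,x_0)\subseteq H$ you feed $U$ directly into the strong $H$-SLT hypothesis to build the $V_x$'s, then handle the basepoint mismatch in Spanier generators by conjugating within the path-connected $V_x$. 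This is effectively an inlining of the relevant part of the proof of Proposition~\ref{2.1}, and it has the minor advantage of being self-contained and avoiding the detour through the quasitopological fundamental group.
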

\begin{proof}
As mentioned in the introduction, $\pi_{1}^{wh}(X,x_{0})$ is finer than $\pi_{1}^{l}(X,x_{0})$.

To prove the other direction, let $H$ be open in $\pi_{1}^{wh}(X,x_{0})$. It is easy to check that all strong $H$-SLT spaces at $x_{0}$ are $H$-SLT space at $x_{0}$. Using \cite[Proposition 3.7]{Pasha}, $H$ is open in $\pi_{1}^{qtop}(X,x_{0})$. On the other hand, by Proposition \ref{2.1}, there is an open cover $\mathcal{U}$ of $X$ such that $\pi(\mathcal{U},x_{0}) \leq H$. Choose an arbitrary element $[\alpha]\in{H}$. Consider open basis neighborhood $B([\alpha], \mathcal{U}, W)$ in $\pi_{1}^{l}(X,x_{0})$, where $W\in{\mathcal{U}}$ containing $x_{0}$. Let $[\alpha\ast l\ast w]\in{B([\alpha], \mathcal{U}, W)}$. Since $[w]\in{\pi(\mathcal{U},x_{0})}$ and  $\pi(\mathcal{U},x_{0}) \leq H$, we have $[l][w]=[l\ast w]\in{H}$. Hence, by $[\alpha]\in{H}$, we can conclude that $B([\alpha], \mathcal{U}, W)\subseteq H$. Therefore, $H$ is open in $\pi_{1}^{l}(X,x_{0})$.
\end{proof}

%%%%%%%%%%%%%%%%%%%%%%%%%%%%%%%%%%%%%%%%%%%%%%%
In the following, we show that closeness of $H$ in $\pi_{1}^{l}(X,x_{0})$ has a remarkable relation to the fibers of the endpoint projection map $p_{H}:\widetilde{X}^{l}_{H}\rightarrow X$.
%%%%%%%%%%%%%%%%%%%%%%%%%%%%%%%%%%%%%%%%%%%%%%%

\begin{proposition}\label{3.7}
Let $X$ be path connected and $H$ be a normal subgroup of $\pi_{1}(X,x_{0})$. Then $(p_{H}^{-1}(x_{0}))^{l}$ is Hausdorff if and only if $H$ is closed in $\pi_{1}^{l}(X,x_{0})$.
\end{proposition}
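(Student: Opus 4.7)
The plan is to identify the subspace $(p_H^{-1}(x_0))^{l}$ with the coset space $\pi_{1}(X,x_{0})/H$ (which makes sense because $H$ is normal), and then prove the equivalent statement that this quotient is Hausdorff iff $H$ is closed in $\pi_{1}^{l}(X,x_{0})$. The argument rests on two combinatorial identities that I would establish first:
\begin{align*}
B([\alpha], \mathcal{U}, U)\cap \pi_{1}(X,x_{0}) &= [\alpha]\,\pi(\mathcal{U}, x_{0}),\\
B_{H}([\alpha]_{H}, \mathcal{U}, U)\cap p_{H}^{-1}(x_{0}) &= \bigl\{[\sigma]_{H} : [\sigma]\in [\alpha]\,\pi(\mathcal{U}, x_{0})\,H \bigr\},
\end{align*}
valid whenever $x_{0}\in U\in \mathcal{U}$. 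These follow by unwinding Definition \ref{1.2}: for a loop $\sigma$ at $x_{0}$, the condition $[\sigma]_{H}=[\alpha\ast\gamma\ast\delta]_{H}$ forces $\delta$ to be a loop at $x_{0}$ in $U$, and such a $\delta$ represents an element of $\pi(\mathcal{U},x_{0})$. I would also record the two normalities needed throughout: $H$ is normal by hypothesis, and each Spanier subgroup $\pi(\mathcal{U},x_{0})$ is normal in $\pi_{1}(X,x_{0})$ immediately from its generating set of elements $[\alpha\ast\beta\ast\alpha^{-1}]$.

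For the direction \emph{Hausdorff $\Rightarrow$ closed}, I would take $[\alpha]\notin H$, so $[\alpha]_{H}\neq [c_{x_{0}}]_{H}$. Using Hausdorffness in the fiber, together with a common refinement, I would produce a single open cover $\mathcal{U}$ and an open $U\in \mathcal{U}$ with $x_{0}\in U$ such that $B_{H}([\alpha]_{H},\mathcal{U},U)$ and $B_{H}([c_{x_{0}}]_{H},\mathcal{U},U)$ are disjoint in the fiber. If some $[h]\in [\alpha]\pi(\mathcal{U},x_{0})\cap H$ existed, the coset identity above would place $[h]_{H}$ in both neighborhoods, a contradiction. Thus $[\alpha]\pi(\mathcal{U},x_{0})=B([\alpha],\mathcal{U},U)\cap \pi_{1}(X,x_{0})$ is an open neighborhood of $[\alpha]$ in $\pi_{1}^{l}(X,x_{0})$ disjoint from $H$, showing $\pi_{1}(X,x_{0})\setminus H$ is open.

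For \emph{closed $\Rightarrow$ Hausdorff}, I would take $[\alpha]_{H}\neq [\beta]_{H}$, so $[\alpha\ast\beta^{-1}]\notin H$, and use closedness of $H$ to obtain a cover $\mathcal{U}$ (with $x_{0}\in U\in \mathcal{U}$) such that $[\alpha\beta^{-1}]\pi(\mathcal{U},x_{0})\cap H=\emptyset$. By normality of both $\pi(\mathcal{U},x_{0})$ and $H$, this rewrites as $[\alpha\beta^{-1}]\notin \pi(\mathcal{U},x_{0})H$. I would then claim $B_{H}([\alpha]_{H},\mathcal{U},U)$ and $B_{H}([\beta]_{H},\mathcal{U},U)$ are disjoint in the fiber: a hypothetical common element $[\sigma]_{H}$ would give $[\sigma]\in [\alpha]\pi(\mathcal{U},x_{0})H\cap [\beta]\pi(\mathcal{U},x_{0})H$, yielding $[\beta^{-1}\alpha]\in \pi(\mathcal{U},x_{0})H$, whence conjugating by $[\beta]$ and using that both $\pi(\mathcal{U},x_{0})$ and $H$ are invariant under conjugation forces $[\alpha\beta^{-1}]\in \pi(\mathcal{U},x_{0})H$, contradicting the choice of $\mathcal{U}$.

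The main obstacle will be managing the sidedness of the cosets: the fiber is naturally indexed by left cosets of $H$, while the Spanier constraints produce a mixture of left and right multiplication, and the algebraic test $[\alpha\beta^{-1}]\notin H$ differs from $[\beta^{-1}\alpha]\notin H$ by conjugation by $[\beta]$. Both normalities are precisely what is needed to absorb these conjugations, so that the purely formal fact ``topological group modulo closed normal subgroup is Hausdorff'' can be executed concretely via the explicit lasso neighborhood basis.
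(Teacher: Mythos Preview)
Your proposal is correct and follows essentially the same route as the paper's proof: both directions proceed by contradiction using a basic lasso neighborhood built from a single open cover (obtained via refinement in the forward direction, and from closedness of $H$ in the reverse direction), and both exploit that a loop $\delta$ in $U\in\mathcal{U}$ based at $x_{0}$ already lies in $\pi(\mathcal{U},x_{0})$. Your explicit identification of the fiber neighborhoods with cosets $[\alpha]\,\pi(\mathcal{U},x_{0})\,H$ and your use of the normality of $\pi(\mathcal{U},x_{0})$ make the algebra more transparent than the paper's element-by-element computation, but the underlying argument is the same; the only cosmetic difference is that the paper passes to $[\alpha^{-1}\ast\beta]$ at the outset (via normality of $H$) rather than to $[\alpha\ast\beta^{-1}]$ followed by a final conjugation.
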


\begin{proof}
Let $(p_{H}^{-1}(x_{0}))^{l}$ be Hausdorff. It is sufficient to show that $\pi_{1}(X,x_{0})\setminus H$ is open in $\pi_{1}^{l}(X,x_{0})$. Let $[\alpha]\in{\pi_{1}(X,x_{0})\setminus H}$, that is, $[\alpha]\notin{H}$ or equivalently, $[\alpha]_{H}\neq [c_{x_{0}}]_{H}$. Since $(p_{H}^{-1}(x_{0}))^{l}$ is Hausdorff, there are open basis neighborhoods $M=B_{H}([\alpha]_{H}, \mathcal{U}, U)$ and $N=B_{H}([c_{x_{0}}]_{H}, \mathcal{V}, V)$ in $(p_{H}^{-1}(x_{0}))^{l}$ such that $M\cap N=\emptyset$, where $U$ and $V$ are open neighborhoods containing $x_{0}$. Consider open cover $\mathcal{W}=\lbrace U\cap V \ \vert \ U\in{\mathcal{U}} \ , \ V\in{\mathcal{V}}\rbrace$ and open basis neighborhood $B([\alpha], \mathcal{W}, U\cap V)$ containing $[\alpha]$. We show that $B([\alpha], \mathcal{W}, U\cap V)\cap H=\emptyset$. By contrary, suppose $[\alpha\ast l\ast w]\in{H}$, where $[l]\in{\pi(\mathcal{W},x_{0})}$ and $w$ is a loop inside $U\cap V$ based at $x_{0}$. Note that $[\alpha\ast l\ast w]\in{H}$ is equivalent to $[\alpha\ast l\ast w]_{H}=[c_{x_{0}}]_{H}$, i.e.,  $[\alpha\ast l\ast w]_{H}\in{M\cap N}$ which is a contradiction.

Conversely, assume $[\alpha]_{H}\neq [\beta]_{H}$, that is, $[\alpha\ast \beta^{-1}]\notin{H}$, where $[\alpha]_{H}$, $[\beta]_{H}\in{(p_{H}^{-1}(x_{0}))^{l}}$. Note that since $H$ is a normal subgroup of $\pi_{1}(X,x_{0})$, it is easy to see that $[\alpha^{-1}\ast\beta]\notin{H}$. Since $H$ is a closed subgroup of $\pi_{1}^{l}(X,x_{0})$, there is  an open cover $\mathcal{U}$ of $X$ such that $B([\alpha^{-1}\ast\beta],\mathcal{U}, U)\cap H=\emptyset$, where $U\in{\mathcal{U}}$. Consider open basis neighborhoods $M=B_{H}([\beta]_{H},\mathcal{U}, U)$ and $N=B_{H}([\alpha]_{H},\mathcal{U}, U)$. It is enough to show that $M\cap N=\emptyset$. By contrary, suppose $[\gamma]_{H}\in{M\cap N}$. So, there are $[l]$, $[s]\in{\pi(\mathcal{U}, x_{0})}$ and loops $w$ and $v$ inside $U$ based at $x_{0}$ such that $[\gamma]_{H}=[\beta\ast l\ast w]_{H}=[\alpha\ast s \ast v]_{H}$, that is, $[\beta\ast l\ast w \ast v^{-1} \ast s^{-1} \ast \alpha]\in{H}$. By the normality of $H$, we have $[\alpha^{-1}\ast\beta\ast l\ast w \ast v^{-1} \ast s^{-1}]\in{H}$ or equivalently, $[\alpha^{-1}\ast\beta\ast l\ast w \ast v^{-1} \ast s^{-1}\ast c_{x_{0}}]\in{H}$, where $[l\ast w \ast v^{-1} \ast s^{-1}]\in{\pi(\mathcal{U},x_{0})}$ and the constant path $c_{x_{0}}$ can be seen as loop inside $U$ based at $x_{0}$. This is a contradiction because $B([\alpha^{-1}\ast\beta],\mathcal{U}, U)\cap H=\emptyset$.
\end{proof}
%%%%%%%%%%%%%%%%%%%%%%%%%%%%%%%%%%%%%%%%%%%%%%%

\begin{corollary}\label{3.8}
Let $H$ be any normal subgroup of $\pi_{1}(X,x_{0})$ and $X$ be a locally path connected strong $H$-SLT space at $x_{0}$. Then $H$ is closed in $\pi_{1}^{wh}(X,x_{0})$ if and only if $H$ is closed in $\pi_{1}^{l}(X,x_{0})$.
\end{corollary}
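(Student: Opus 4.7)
The plan is to obtain the stated biconditional as a direct combination of Corollary \ref{3.3}, Proposition \ref{3.7}, and the parallel whisker analog of Proposition \ref{3.7}.

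One direction is immediate: the whisker topology is finer than the lasso topology, so every lasso-closed set is automatically whisker-closed; hence if $H$ is closed in $\pi_{1}^{l}(X,x_{0})$, then $H$ is closed in $\pi_{1}^{wh}(X,x_{0})$. Only the converse requires the strong $H$-SLT hypothesis.

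For the converse, I would first invoke Corollary \ref{3.3}: since $H$ is normal and $X$ is a strong $H$-SLT space at $x_{0}$, one has $(p_{H}^{-1}(x_{0}))^{wh}=(p_{H}^{-1}(x_{0}))^{l}$ as topological spaces, so in particular one fiber is Hausdorff if and only if the other is. Proposition \ref{3.7} identifies closedness of $H$ in $\pi_{1}^{l}(X,x_{0})$ with Hausdorffness of $(p_{H}^{-1}(x_{0}))^{l}$. The remaining ingredient is the parallel whisker version: $H$ is closed in $\pi_{1}^{wh}(X,x_{0})$ if and only if $(p_{H}^{-1}(x_{0}))^{wh}$ is Hausdorff. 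Its proof mirrors that of Proposition \ref{3.7} almost verbatim, replacing each lasso basic neighborhood $B_{H}([\alpha]_{H},\mathcal{U},U)$ by the simpler whisker basic neighborhood $B_{H}([\alpha]_{H},U)$, and using normality of $H$ in exactly the same way to move from $[\alpha]_{H}\neq[\beta]_{H}$ to $[\alpha^{-1}\ast\beta]\notin H$.

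Chaining these three equivalences yields
\[
H \text{ closed in } \pi_{1}^{wh}(X,x_{0}) \iff (p_{H}^{-1}(x_{0}))^{wh} \text{ Hausdorff} \iff (p_{H}^{-1}(x_{0}))^{l} \text{ Hausdorff} \iff H \text{ closed in } \pi_{1}^{l}(X,x_{0}),
\]
which is the desired biconditional. The main obstacle I anticipate is writing out the whisker analog of Proposition \ref{3.7}: although conceptually a routine adaptation, one must carefully replace the open-cover machinery of the lasso basis with its single-open-set whisker counterpart and verify that the disjointness and separation arguments still close up. Everything else is a direct assembly of results already established in the paper.
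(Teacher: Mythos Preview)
Your argument is correct, and it follows a somewhat different route from the paper's own proof for the nontrivial direction. Both proofs finish in the same way---using Corollary~\ref{3.3} to identify the whisker and lasso fibers, and then Proposition~\ref{3.7} to pass from Hausdorffness of $(p_{H}^{-1}(x_{0}))^{l}$ to closedness of $H$ in $\pi_{1}^{l}(X,x_{0})$. The difference lies in how one gets from ``$H$ closed in $\pi_{1}^{wh}(X,x_{0})$'' to ``$(p_{H}^{-1}(x_{0}))^{wh}$ is Hausdorff''. You do this directly by proving the whisker analog of Proposition~\ref{3.7}, which is indeed a routine adaptation (replace each $B_{H}([\alpha]_{H},\mathcal{U},U)$ by $B_{H}([\alpha]_{H},U)$; the normality-based separation argument goes through unchanged). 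The paper instead takes a detour through the quasitopological fundamental group: it first uses that strong $H$-SLT at $x_{0}$ implies $H$-SLT at $x_{0}$, then invokes \cite[Corollary~2.8]{Pasha} to deduce that $H$ is closed in $\pi_{1}^{qtop}(X,x_{0})$, then \cite[Theorem~11]{BrazFa} to conclude that $p_{H}:\widetilde{X}^{wh}_{H}\to X$ has unique path lifting, and finally \cite[Corollary~3.10]{Pasha} to obtain Hausdorffness of the whisker fiber. Your approach is more self-contained and avoids these external citations; moreover it does not actually use the local path connectedness hypothesis, whereas the paper's route through $\pi_{1}^{qtop}$ and the cited results does. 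The paper's approach, on the other hand, ties the result into the broader framework of unique path lifting and $\textbf{lpc}_{0}$-coverings already developed in the literature.
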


\begin{proof}
Note that $\pi_{1}^{wh}(X,x_{0})$ is finer than $\pi_{1}^{l}(X,x_{0})$, in general \cite{VZcom}.

To prove the other direction, let $H$ be closed in $\pi_{1}^{wh}(X,x_{0})$. We know that all strong $H$-SLT spaces at $x_{0}$ are $H$-SLT space at $x_{0}$. Hence by \cite[Corollary 2.8]{Pasha}, $H$ is closed in $\pi_{1}^{qtop}(X,x_{0})$. On the other hand, by \cite[Theorem 11]{BrazFa}, $p_{H}:\widetilde{X}^{wh}_{H}\rightarrow X$ has unique path lifting property, that is, $p_{H}$ is $\textbf{lpc}_{0}$-covering map (see \cite[Theorem 5.11]{BrazG}). Moreover, by \cite[Corollary 3.10]{Pasha}, $(p_{H}^{-1}(x_{0}))^{wh}$ is Hausdorff. Therefore,  Corollary \ref{3.3} and Proposition \ref{3.7} imply that $(p_{H}^{-1}(x_{0}))^{l}$ is Hausdorff and accordingly, $H$ is closed in $\pi_{1}^{l}(X,x_{0})$.
\end{proof}

%%%%%%%%%%%%%%%%%%%%%%%%%%%%%%%%%%%%%%%%%%%%%%%
 In \cite[Lemma 5.10]{BrazG} it was shown that every $\textbf{lpc}_{0}$-covering map is equivalent to a certain endpoint projection map. On the other hand, the unique path lifting property of the endpoint projection map $p_{H}:\widetilde{X}^{wh}_{H}\rightarrow X$ implies that $p_{H}$ is an $\textbf{lpc}_{0}$-covering map \cite[Theorem 5.11]{BrazG}. The main advantage of the following theorem is that the map $p:\widetilde{X}\rightarrow X$ with $p_{\ast}\pi_{1}(\widetilde{X}, \tilde{x}_{0})=H\leq \pi_{1}(X,x_{0})$ is an $\textbf{lpc}_{0}$-covering map when $p_{H}:\widetilde{X}^{l}_{H}\rightarrow X$ has unique path lifting property.

\begin{proposition}
Let $H\unlhd \pi_{1}(X,x_{0})$. If $X$ is a locally path connected and strong $H$-SLT space at $x_{0}$, then the following statements are equivalent.
\begin{itemize}
\item[1].
$p_{H}:\widetilde{X}^{wh}_{H}\rightarrow X$ has unique path lifting property.

\item[2].
$p_{H}:\widetilde{X}^{l}_{H}\rightarrow X$ has unique path lifting property.
\end{itemize}
\end{proposition}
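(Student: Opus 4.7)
The plan is to characterize each of (1) and (2) in terms of a closure/Hausdorffness condition on the corresponding fiber of $p_H$ over $x_0$, and then pass between the two characterizations using Corollary \ref{3.3} (equivalently Corollary \ref{3.8}). The input from the preceding results is exactly tailored to make this reduction work: Corollary \ref{3.8} identifies closedness of $H$ in $\pi_1^{wh}(X,x_0)$ with closedness in $\pi_1^l(X,x_0)$ under the standing hypotheses, and Proposition \ref{3.7} identifies the latter with Hausdorffness of the fiber $(p_H^{-1}(x_0))^l$.

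On the whisker side, I would reproduce the chain used in the proof of Corollary \ref{3.8}: (1) is equivalent to $H$ being closed in $\pi_1^{wh}(X,x_0)$, using \cite[Theorem 11]{BrazFa} together with \cite[Corollary 3.10]{Pasha}, since strong $H$-SLT at $x_0$ implies $H$-SLT at $x_0$ and the passage from closedness in $\pi_1^{wh}$ to closedness in $\pi_1^{qtop}$ is \cite[Corollary 2.8]{Pasha}. For the converse implication within this step, I would use that failure of Hausdorffness in $(p_H^{-1}(x_0))^{wh}$ yields two distinct continuous lifts of a single path, contradicting unique path lifting. On the lasso side, I would establish the direct analog: (2) is equivalent to $(p_H^{-1}(x_0))^l$ being Hausdorff, which by Proposition \ref{3.7} (using normality of $H$) is equivalent to $H$ being closed in $\pi_1^l(X,x_0)$. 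The forward direction again reduces to inseparable fiber points yielding non-unique lifts; the reverse direction uses the canonical lift $\tilde{\gamma}(t)=[\gamma_t]_H$, which is continuous in the lasso topology, and the standard clopen decomposition of the coincidence set of two candidate lifts.

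With both characterizations in hand, Corollary \ref{3.8} closes the loop: under the hypotheses (locally path connected, strong $H$-SLT at $x_0$, $H\trianglelefteq\pi_1(X,x_0)$), $H$ is closed in $\pi_1^{wh}(X,x_0)$ iff $H$ is closed in $\pi_1^l(X,x_0)$, so (1) iff (2). The main obstacle I anticipate is the lasso analog in the second step above, which the paper never isolates as a standalone lemma; the argument is a routine transcription of the whisker case since the endpoint projection and its canonical lift are the same set-theoretic construction in both topologies, but the continuity of $t\mapsto[\gamma_t]_H$ in the (coarser) lasso topology and the openness-plus-closedness of the coincidence set require careful verification, particularly because the lasso basic neighborhoods involve both Spanier-type loops and small tails rather than just small tails.
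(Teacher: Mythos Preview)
Your approach is essentially the paper's: characterize unique path lifting on each side by closedness of $H$ in the corresponding topologized fundamental group, and then apply Corollary~\ref{3.8}. Two remarks on the comparison. First, your anticipated obstacle on the lasso side is not one: the paper simply quotes \cite[Theorem~5.6]{Bcovering} for the equivalence ``$p_H:\widetilde{X}^{l}_{H}\to X$ has UPL $\Leftrightarrow$ $H$ is closed in $\pi_1^{l}(X,x_0)$'', so there is no need to re-derive continuity of the standard lift or the clopen coincidence-set argument in the lasso topology. Second, the paper does not treat the two implications symmetrically as you do: it dispatches one direction by the bare remark that $\widetilde{X}^{wh}_{H}$ is finer than $\widetilde{X}^{l}_{H}$, and runs the closedness chain (\cite[Theorem~5.6]{Bcovering}, Corollary~\ref{3.8}, \cite[Corollary~2.8]{Pasha}, \cite[Theorem~11]{BrazFa}) only for the other. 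Your uniform treatment via closedness is actually the more careful one, since the ``finer'' observation by itself yields $(2)\Rightarrow(1)$ (lasso-continuous lifts contain the whisker-continuous ones, so uniqueness among the former forces uniqueness among the latter) rather than $(1)\Rightarrow(2)$ as the paper labels it; your symmetric route through Corollary~\ref{3.8} avoids that issue entirely.
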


\begin{proof}
$1.\Rightarrow 2.$ It follows from the statement ``$\widetilde{X}^{wh}_{H}$ is finer than $\widetilde{X}^{l}_{H}$''.

$2.\Rightarrow 1.$ The unique path lifting property of $p_{H}:\widetilde{X}^{l}_{H}\rightarrow X$ is equivalent to the closeness of $H$ in $\pi_{1}^{l}(X,x_{0})$ (see \cite[Theorem 5.6]{Bcovering}). So, by Corollary \ref{3.8}, $H$ is closed in $\pi_{1}^{wh}(X,x_{0})$.Therefore, using \cite[Corollary 2.8]{Pasha}, $H$ is closed in $\pi_{1}^{qtop}(X,x_{0})$ which implies that $p_{H}:\widetilde{X}^{wh}_{H}\rightarrow X$ has unique path lifting property \cite[Theorem 11]{BrazFa}.
\end{proof}
%%%%%%%%%%%%%%%%%%%%%%%%%%%%%%%%%%%%%%%%%%%%%%%

\begin{proposition}
Let $p:\widetilde{X}\rightarrow X$ be a semicovering map and $\widetilde{H}$ be a subgroup of $\pi_{1}(\widetilde{X},\tilde{x}_{0})$, where $p(\tilde{x}_{0})=x_{0}$. Let $H=p_{\ast}(\widetilde{H})$ where $p_{\ast}:\pi_{1}(\widetilde{X},\tilde{x}_{0})\rightarrow \pi_{1}(X,x_{0})$ is the induced homomorphism by $p$. If $X$ is a strong $H$-SLT space, then $\widetilde{X}$ is a strong $\widetilde{H}$-SLT space.
\end{proposition}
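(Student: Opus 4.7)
The plan is to transfer the strong SLT property from $X$ to $\widetilde{X}$ by projecting downstairs via $p$, applying the hypothesis on $X$, and lifting back using the semicovering properties. Fix $\tilde{x}\in\widetilde{X}$ and a path $\tilde{\delta}$ from $\tilde{x}_{0}$ to $\tilde{x}$; set $x=p(\tilde{x})$ and $\delta=p\circ\tilde{\delta}$. Since $p\circ(\tilde{\delta}^{-1}\ast\tilde{h}\ast\tilde{\delta})=\delta^{-1}\ast(p\circ\tilde{h})\ast\delta$ for any loop $\tilde{h}$ at $\tilde{x}_{0}$, a routine calculation gives $p_{\ast}([\tilde{\delta}^{-1}\widetilde{H}\tilde{\delta}])=[\delta^{-1}H\delta]$. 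By hypothesis, $X$ is strong $[\delta^{-1}H\delta]$-SLT at $x$, so it suffices to show $\widetilde{X}$ is strong $[\tilde{\delta}^{-1}\widetilde{H}\tilde{\delta}]$-SLT at $\tilde{x}$.

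Given an open neighborhood $\widetilde{U}$ of $\tilde{x}$, I would use that $p$ is a local homeomorphism to shrink $\widetilde{U}$ to an open set $\widetilde{U}'\subseteq\widetilde{U}$ with $\tilde{x}\in\widetilde{U}'$ and $p|_{\widetilde{U}'}\colon\widetilde{U}'\to U:=p(\widetilde{U}')$ a homeomorphism. Then for each $\tilde{y}\in\widetilde{X}$, apply strong $[\delta^{-1}H\delta]$-SLT at $x$ to the pair $(y,U)$, where $y=p(\tilde{y})$, obtaining an open $V\ni y$. Set $\widetilde{V}$ to be an open neighborhood of $\tilde{y}$ contained in $p^{-1}(V)$, which is open by continuity of $p$.

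Now let $\tilde{\beta}\colon I\to\widetilde{V}$ be a loop at $\tilde{y}$ and $\tilde{\alpha}\colon I\to\widetilde{X}$ a path from $\tilde{x}$ to $\tilde{y}$. Their projections $\alpha=p\circ\tilde{\alpha}$ and $\beta=p\circ\tilde{\beta}$ satisfy the hypothesis in $X$, so there is a loop $\lambda\colon I\to U$ based at $x$ with $[\alpha\ast\beta\ast\alpha^{-1}\ast\lambda^{-1}]\in[\delta^{-1}H\delta]$. Write this element as $[\delta^{-1}\ast h\ast\delta]$ with $[h]\in H$. Since $H=p_{\ast}(\widetilde{H})$, choose a loop $\tilde{h}$ at $\tilde{x}_{0}$ with $[\tilde{h}]\in\widetilde{H}$ and $[p\circ\tilde{h}]=[h]$. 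Lift $\lambda$ through $(p|_{\widetilde{U}'})^{-1}$ to a loop $\tilde{\lambda}$ in $\widetilde{U}'\subseteq\widetilde{U}$ based at $\tilde{x}$.

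For the conclusion, observe that both $\tilde{\alpha}\ast\tilde{\beta}\ast\tilde{\alpha}^{-1}\ast\tilde{\lambda}^{-1}$ and $\tilde{\delta}^{-1}\ast\tilde{h}\ast\tilde{\delta}$ are loops at $\tilde{x}$ in $\widetilde{X}$, and their projections under $p$ are homotopic rel endpoints in $X$. The unique path and homotopy lifting property of the semicovering $p$ then promotes this homotopy to $\widetilde{X}$, yielding $[\tilde{\alpha}\ast\tilde{\beta}\ast\tilde{\alpha}^{-1}\ast\tilde{\lambda}^{-1}]=[\tilde{\delta}^{-1}\ast\tilde{h}\ast\tilde{\delta}]\in[\tilde{\delta}^{-1}\widetilde{H}\tilde{\delta}]$, hence $[\tilde{\alpha}\ast\tilde{\beta}\ast\tilde{\alpha}^{-1}]_{[\tilde{\delta}^{-1}\widetilde{H}\tilde{\delta}]}=[\tilde{\lambda}]_{[\tilde{\delta}^{-1}\widetilde{H}\tilde{\delta}]}$. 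I expect the main technical obstacle to be ensuring the lift $\tilde{\lambda}$ lands inside the prescribed $\widetilde{U}$; this is exactly what forces the preliminary shrinking to the sheet $\widetilde{U}'$ on which $p$ is a homeomorphism, so that the inverse branch of $p$ is defined along all of $\lambda(I)$ and the loop structure is preserved.
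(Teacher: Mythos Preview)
Your proposal is correct and follows essentially the same route as the paper: project the data via $p$, apply the strong $[\delta^{-1}H\delta]$-SLT property downstairs, lift the resulting small loop back through a sheet on which $p$ is a homeomorphism, and then use injectivity of $p_{\ast}$ to pull the $H$-relation up to an $\widetilde{H}$-relation. The only cosmetic difference is in the last step: the paper simply invokes that $p_{\ast}$ is injective (so $p_{\ast}[\gamma]\in H=p_{\ast}(\widetilde{H})$ forces $[\gamma]\in\widetilde{H}$), whereas you unpack the same fact by choosing an explicit preimage $\tilde{h}$ and appealing to unique homotopy lifting---these are equivalent.
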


\begin{proof}
Assume $\tilde{\lambda}$ is an arbitrary path from $\tilde{x}_{0}$ to $\tilde{\lambda}(1)=\tilde{x}$. It suffices to show that $X$ is a strong $[(\tilde{\lambda})^{-1}H \tilde{\lambda}]$-SLT space at $\tilde{x}$. Let $\widetilde{S}$ be an open subset in $\widetilde{X}$ containing $\tilde{x}$ and $\tilde{y}$ be an arbitrary point in $\widetilde{X}$. Put $p\circ\tilde{\lambda}=\lambda$ and $p(\tilde{x})=x$, where $\lambda$ is a path in $X$ from $x_{0}$ to $x$. Since $p:\widetilde{X}\rightarrow X$ is a local homeomorphism, there is an open subset $\widetilde{W}$ of $\tilde{x}$ such that $p\vert_{\widetilde{W}}: \widetilde{W}\rightarrow W$ is a homeomorphism. Put $\widetilde{U}=\widetilde{S}\cap \widetilde{W}$. Note that $p\vert_{\widetilde{U}}: \widetilde{U}\rightarrow U$ is a homeomorphism as well, where $p(\widetilde{U})=U$ is an open subset of $x$ in $X$. By assumption, since $X$ is a strong $[\lambda^{-1}H\lambda]$-SLT space at $x$, for point $p(\tilde{y})=y$ there is an open subset $V$ containing $y$ such that for every path $\alpha$ from $x$ to $y$ and for every loop $\beta$ at $y$ in $V$ there is a loop $\delta$ based at $x$ in $U$ such that $[\alpha\ast\beta\ast\alpha^{-1}]_{[\lambda^{-1}H\lambda]}=[\delta]_{[\lambda^{-1}H\lambda]}$. By the local homeomorphism property of $p:\widetilde{X}\rightarrow X$, we have an open subset $\widetilde{V}$ of $\tilde{y}$ such that $p\vert_{\widetilde{V}}: \widetilde{V}\rightarrow V$ is a homeomorphism. Now let $\tilde{\alpha}$ be a path from $\tilde{x}$ to $\tilde{y}$ and $\tilde{\beta}$ is a loop based at $\tilde{y}$ in $\widetilde{V}$. Put $p\circ\tilde{\alpha}=\alpha$ and $p\circ\tilde{\beta}=\beta$, where $\alpha$ is a path from $x$ to $y$  and $\beta$ is a loop at $y$ in $V$. Hence there is a loop $\delta:I\rightarrow U$ at $x$ such that $[\alpha\ast \beta\ast\alpha^{-1}]_{[\lambda^{-1}H \lambda]}=[\delta]_{[\lambda^{-1}H \lambda]}$ or equivalently, $[\lambda\ast\alpha\ast\beta\ast\alpha^{-1}\delta^{-1}\ast\lambda^{-1}]\in{H}$. By the homeomorphism $p\vert_{\widetilde{U}}: \widetilde{U}\rightarrow U$, there is a loop $\tilde{\delta}$ at $\tilde{x}$ in $\widetilde{U}$ such that $p(\tilde{\delta})=\delta$. On the other hand, we know that $p_{\ast}([\tilde{\lambda}\ast\tilde{\alpha}\ast\tilde{\beta}\ast(\tilde{\alpha})^{-1}\ast(\tilde{\delta})^{-1}\ast
(\tilde{\lambda})^{-1}])=[(p\circ\tilde{\lambda})\ast(p\circ\tilde{\alpha})\ast(p\circ\tilde{\beta})\ast
(p\circ(\tilde{\alpha})^{-1})\ast(p\circ(\tilde{\delta})^{-1})\ast(p\circ(\tilde{\lambda})^{-1})]=[\lambda
\ast\alpha\ast\beta\ast\alpha^{-1}\delta^{-1}\ast\lambda^{-1}]\in{H}$. Moreover, by the definition of semicovering map \cite[Definition 3.1]{BrazS}, $p_{\ast}$ is injection. Thus, by the definition of $\widetilde{H}$, we have $[\tilde{\lambda}\ast\tilde{\alpha}\ast\tilde{\beta}\ast(\tilde{\alpha})^{-1}\ast(\tilde{\delta})^{-1}\ast
(\tilde{\lambda})^{-1}]\in{\widetilde{H}}$ or equivalently, $[\tilde{\alpha}\ast\tilde{\beta}\ast(\tilde{\alpha})^{-1}]_{[(\tilde{\lambda})^{-1}H \tilde{\lambda}]}=[(\tilde{\delta})^{-1}]_{[(\tilde{\lambda})^{-1}H \tilde{\lambda}]}$. Therefore, $\widetilde{X}$ is a strong $\widetilde{H}$-SLT space.
\end{proof}

%%%%%%%%%%%%%%%%%%%%%%%%%%%%%%%%%%%%%%%%%%%%%%%
\begin{corollary}\label{3.11}
Let $p:\widetilde{X}\rightarrow X$ be a semicovering map. If $X$ is a strong SLT space, then so is $\widetilde{X}$.
\end{corollary}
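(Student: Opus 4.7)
The plan is to obtain this as an immediate specialization of the preceding Proposition by choosing the trivial subgroup. First I would observe that by definition a \emph{strong SLT space} is precisely a strong $H$-SLT space when $H$ is the trivial subgroup of the fundamental group, so the statement reduces to transferring this property through the semicovering $p$.

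Next I would set $\widetilde{H} = \{[c_{\tilde{x}_0}]\}$, the trivial subgroup of $\pi_1(\widetilde{X},\tilde{x}_0)$, and note that since $p_\ast \colon \pi_1(\widetilde{X},\tilde{x}_0)\to \pi_1(X,x_0)$ is a group homomorphism, its image $H = p_\ast(\widetilde{H})$ is the trivial subgroup of $\pi_1(X,x_0)$. The hypothesis that $X$ is strong SLT is then exactly the assertion that $X$ is strong $H$-SLT for this particular $H$, so the preceding Proposition applies verbatim and yields that $\widetilde{X}$ is strong $\widetilde{H}$-SLT, i.e.\ strong SLT.

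There is essentially no new obstacle at the corollary level; all the genuine work, namely exploiting that $p$ is a local homeomorphism to lift an open neighborhood and a loop $\delta$ into $\widetilde{X}$, together with the injectivity of $p_\ast$ for a semicovering to pull the identification $[\alpha\ast\beta\ast\alpha^{-1}]=[\delta]$ back from $X$ to $\widetilde{X}$, has already been carried out in the proof of the preceding Proposition. The only sanity check is that the ``strong SLT'' property is uniform in the basepoint and the reference path $\delta$, which is automatic here since conjugating the trivial subgroup by any path again produces the trivial subgroup, so the hypothesis matches the conclusion at every point of $\widetilde{X}$.
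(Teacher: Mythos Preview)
Your proposal is correct and is exactly the approach intended by the paper: the corollary is stated immediately after the preceding Proposition with no separate proof, because it is obtained precisely by taking $\widetilde{H}$ to be the trivial subgroup (so that $H=p_{\ast}(\widetilde{H})$ is trivial as well) and noting that ``strong SLT'' is by definition ``strong $H$-SLT for $H=1$.'' Your additional remark that conjugates of the trivial subgroup remain trivial, so the basepoint-uniform definition of strong SLT matches up, is a harmless sanity check.
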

%%%%%%%%%%%%%%%%%%%%%%%%%%%%%%%%%%%%%%%%%%%%%%

It is known that any semilocally simply connected space is a strong SLT space and any strong SLT space is a strong $H$-SLT space for every subgroup $H$ of $\pi_{1}(X,x_{0})$. Note that any space $X$ is a strong $\pi_{1}(X,x_{0})$-SLT space. The following theorem help us to give an example of a strong $H$-SLT space which is not a strong SLT space and hence, it is not semilocally simply connected, where $H\neq \pi_{1}(X,x_{0})$.
%%%%%%%%%%%%%%%%%%%%%%%%%%%%%%%%%%%%%%%%%%%%%%

\begin{theorem}\label{3.12}
Let $X$ be a locally path connected space and $H$ be an open normal subgroup in $\pi_{1}^{qtop}(X,x_{0})$. Then $X$ is a strong $H$-SLT space.
\end{theorem}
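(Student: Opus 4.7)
The plan is to reduce everything to a theorem already cited in the paper, namely Torabi et al.'s result that every open normal subgroup of $\pi_{1}^{qtop}(X,x_{0})$ contains a Spanier group. Since $H$ is open and normal in $\pi_{1}^{qtop}(X,x_{0})$ and $X$ is locally path connected, this immediately supplies an open cover $\mathcal{V}$ of $X$ such that $\pi(\mathcal{V},x_{0})\le H$. From here the strong $H$-SLT property can be read off almost by inspection.

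Fix $x\in X$ and a path $\delta$ from $x_{0}$ to $x$; by definition I must show $X$ is strong $[\delta^{-1}H\delta]$-SLT at $x$. Given an arbitrary open neighborhood $U$ of $x$ and an arbitrary point $y\in X$, I would take the required neighborhood of $y$ to be any element $V\in\mathcal{V}$ with $y\in V$. The key observation is that for every path $\alpha$ from $x$ to $y$ and every loop $\beta\colon I\to V$ based at $y$, the concatenation $\delta\ast\alpha\ast\beta\ast\alpha^{-1}\ast\delta^{-1}$ is exactly one of the defining generators of $\pi(\mathcal{V},x_{0})$: the path $\delta\ast\alpha$ starts at $x_{0}$ and ends at $y$, while $\beta$ is a loop at $y$ lying entirely in $V\in\mathcal{V}$. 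Consequently
\[
[\delta\ast\alpha\ast\beta\ast\alpha^{-1}\ast\delta^{-1}]\in\pi(\mathcal{V},x_{0})\le H,
\]
which rewrites as $[\alpha\ast\beta\ast\alpha^{-1}]\in[\delta^{-1}H\delta]$.

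To finish the verification, I would take $\lambda=c_{x}$, the constant loop at $x$, which trivially lies in $U$. The previous displayed inclusion translates directly into $[\alpha\ast\beta\ast\alpha^{-1}]_{[\delta^{-1}H\delta]}=[c_{x}]_{[\delta^{-1}H\delta]}=[\lambda]_{[\delta^{-1}H\delta]}$, which is precisely the strong $[\delta^{-1}H\delta]$-SLT condition at $x$ for this choice of $y$ and $U$. Since $x$ and $\delta$ were arbitrary, $X$ is a strong $H$-SLT space.

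The only genuinely nontrivial ingredient is the Torabi-Spanier-group theorem, and that is where all three hypotheses (openness of $H$ in $\pi_{1}^{qtop}$, normality of $H$, and local path connectedness of $X$) are actually used. Everything after that step is bookkeeping: recognize the conjugate $\delta\ast\alpha\ast\beta\ast\alpha^{-1}\ast\delta^{-1}$ as a Spanier generator, then use the constant loop as the witness $\lambda$. I therefore do not anticipate any substantial obstacle.
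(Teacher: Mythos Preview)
Your proof is correct, and it is actually more direct than the paper's own argument. Both proofs begin identically, invoking Torabi et al.'s theorem to obtain an open cover $\mathcal{V}$ with $\pi(\mathcal{V},x_{0})\le H$; this is indeed where all three hypotheses are spent. From there, however, the paper proceeds indirectly: it quotes \cite[Proposition~4.4]{Bcovering} to deduce $\widetilde{X}^{wh}_{H}=\widetilde{X}^{l}_{H}$, then uses Remark~\ref{3.1} (normality of $H$) to transport this equality to every conjugate $[\delta^{-1}H\delta]$, and finally appeals to the equivalence in Theorem~\ref{3.2} to conclude that $X$ is strong $H$-SLT. Your argument bypasses this detour entirely by recognizing that every conjugate $[\delta\ast\alpha\ast\beta\ast\alpha^{-1}\ast\delta^{-1}]$ is already a Spanier generator for $\mathcal{V}$, hence lies in $H$, so the constant loop $c_{x}$ serves as the witness $\lambda$ for every neighborhood $U$. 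This is shorter, avoids the external reference to \cite{Bcovering}, and does not rely on the heavier Theorem~\ref{3.2}; the paper's route, on the other hand, illustrates how Theorem~\ref{3.12} fits into the broader correspondence between strong $H$-SLT spaces and the coincidence of the whisker and lasso topologies.
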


\begin{proof}
Let $H$ be an open normal subgroup in $\pi_{1}^{qtop}(X,x_{0})$. By Theorem 2.1 of \cite{TorabiS}, There is an open cover $\mathcal{U}$ in $X$ such that $\pi(\mathcal{U}, x_{0})\leq H$. On the other hand, by \cite[Proposition 4.4]{Bcovering}, $\widetilde{X}^{wh}_{H}=\widetilde{X}^{l}_{H}$. So, using Remark \ref{3.1}, we can conclude that for each path $\delta$ from $x_{0}$ to $x$, $\widetilde{X}^{l}_{[\delta^{-1}H\delta]}=\widetilde{X}^{wh}_{[\delta^{-1}H\delta]}$. Therefore, Theorem \ref{3.2} implies that $X$ is a strong $H$-SLT space.
\end{proof}
%%%%%%%%%%%%%%%%%%%%%%%%%%%%%%%%%%%%%%%%%%%%%%
The following example can justify introducing the relative version of strong SLT spaces with respect to subgroups of the fundamental group.
\begin{example}\label{3.13}
Let $(S^{1},0)$ be the unit circle, $(HA,x)$ be the Harmonic Archipelago, where $x$ is the common point of boundary circles. We consider the wedge space $X=\frac{S^{1} \sqcup HA}{0\sim x}$. In \cite[Example 4.4]{Torabi} it is shown that $\pi_{1}(X,x_{0})\neq \pi_{1}^{sg}(X,x_{0})$. On the other hand, $X$ is a semilocally small generated space \cite{Torabi}. Accordingly, $\pi_{1}^{sg}(X,x_{0})$, introduced by Virk \cite{Virk}, is an open subgroup of $\pi_{1}^{qtop}(X,x_{0})$. Using Theorem \ref{3.12}, we conclude that $X$ is a strong $\pi_{1}^{sg}(X,x_{0})$-SLT space. It is not hard to show that $X$ is not a strong SLT space. To prove that $X$ is not a strong SLT space, consider an arbitrary path in $X$ inside $HA$ from any semilocally simply connected point to the wedge point.
\end{example}

%%%%%%%%%%%%%%%%%%%%%%%%%%%%%%%%%%%%%%%%%%%%%%%
In the following example we show that some results of the paper does not necessarily hold, for instance Proposition \ref{3.7}, for $H$-SLT spaces at a point.
 Also, note that the example below shows that the concepts of relative version of strong SLT and SLT spaces are not necessarily the same.

\begin{example}\label{3.14}
In \cite{FischerC}, Fischer and Zastrow presented an open subgroup $H$ of $\pi_{1}^{qtop}(HE,x_{0})$ which does not contain any open normal subgroup and does not correspond to a covering space \cite[Theorem 4.8]{BrazO}, where $x_{0}$ is the common point of circles. On the other hand, by Definition \ref{1.2} and Proposition \ref{1.4}, it is not hard to observe that covering subgroups correspond to open subgroups of $\pi_{1}^{l}(X,x_{0})$. Moreover, since $H$ is an open subgroup of $\pi_{1}^{qtop}(HE,x_{0})$, Proposition 3.6 of \cite{Pasha} implies that $HE$ is an $H$-SLT space at $x_{0}$ and hence, using \cite[Proposition 3.7]{Pasha}, $H$ is an open subgroup of $\pi_{1}^{wh}(X,x_{0})$. One can see that $H$ is not an open subgroup in $\pi_{1}^{l}(HE,x_{0})$ because it is not a covering subgroup. Therefore, the property of $H$-SLT at $x_{0}$ is not strong enough to prove some results of this paper. Also, we can conclude that $HE$ is an $H$-SLT space which is not a strong $H$-SLT space.
\end{example}

%\section*{Acknowledgments}

\section*{Reference}

\bibliography{mybibfile}

%% \bibitem must have the following form:
%%   \bibitem{key}...
%%

% \bibitem{}

% \end{thebibliography}

\end{document}